\newcommand{\eps}{\varepsilon}
\renewcommand{\b}{\beta}
\renewcommand{\d}{\delta}
\newcommand{\e}{\varepsilon}
\renewcommand{\phi}{\varphi}
\newcommand{\s}{\sigma}
\newcommand{\ssup}[1] {{\scriptscriptstyle{({#1}})}}
\newcommand{\Nlt}{\underline{N}(t)}
\newcommand{\Nl}{\underline{N}(t)}
\newcommand{\one}{{\mathbbm 1}}
\newcommand{\Deltad}{\Delta}
\newcommand{\dd}{\,d}
\newcommand{\R}{\mathbb R}
\newcommand{\Z}{\mathbb Z}
\newcommand{\N}{\mathbb N}
\renewcommand{\E}{\mathbb E}
\renewcommand{\P}{\mathbb P}
\newcommand{\heap}[2]  {\genfrac{}{}{0pt}{}{#1}{#2}}
\newcommand{\sfrac}[2] {\mbox{$\frac{#1}{#2}$}}
\begin{document}

\title*{A scaling limit theorem for the parabolic Anderson model with exponential potential}
\titlerunning{A scaling limit theorem for the parabolic Anderson model}
\author{Hubert Lacoin and Peter M\"orters}
\institute{Hubert Lacoin \at Rome \email{lacoin@math.jussieu.fr}
\and Peter M\"orters \at Bath \email{maspm@bath.ac.uk}}
%
%
\maketitle

\abstract*{The parabolic Anderson problem is the Cauchy problem for the heat equation 
$\partial_t u(t,z)=\Delta u(t,z)+\xi(t,z) u(t,z)$ on $(0,\infty)\times {\mathbb Z}^d$ with random potential 
$(\xi(t, z) \colon z\in {\mathbb Z}^d)$ and localized initial condition. In this paper we consider potentials
which are constant in time and independent exponentially distributed in space. 
We study the growth rate of the total mass of the solution in
terms of weak and almost sure limit theorems, and the spatial spread of the mass in terms of a scaling limit theorem.
The latter result shows that in this case, just like in the case of heavy tailed potentials, the mass gets
trapped in a single relevant island with high probability.}

\abstract{The parabolic Anderson problem is the Cauchy problem for the heat equation 
$\partial_t u(t,z)=\Delta u(t,z)+\xi(t,z) u(t,z)$ on $(0,\infty)\times {\mathbb Z}^d$ with random potential 
$(\xi(t,z) \colon z\in {\mathbb Z}^d)$ and localized initial condition. In this paper we consider potentials
which are constant in time and independent exponentially distributed in space. We study the growth rate of the total mass 
of the solution in
terms of weak and almost sure limit theorems, and the spatial spread of the mass in terms of a scaling limit theorem.
The latter result shows that in this case, just like in the case of heavy tailed potentials, the mass gets
trapped in a single relevant island with high probability.}

\ \\[-1.4cm]
\section{Introduction and main results}
\ \\[-2.3cm]
\subsection{Overview and background}
\ \\[-0.7cm]
We consider the heat equation with random potential on the integer lattice $\Z^d$ and study 
the Cauchy problem with localised initial datum,
\begin{align*} 
\begin{array}{rcll}
\displaystyle \vspace{2mm} \partial_t u(t,z) & = & \Deltad u(t,z)+\xi(t,z)\,u(t,z), \qquad
& \mbox{ for }(t,z)\in (0,\infty)\times \Z^d,\\
\displaystyle\lim_{t\downarrow 0} u(t,z) & = & \one_{0}(z), & \mbox{ for } z\in\Z^d,
\end{array}
\end{align*}
where \ \\[-5mm]
\begin{align*}
(\Deltad f)(z)=\sum_{y\sim z} [f(y)-f(z)], \qquad \mbox{ for } z\in\Z^d, f\colon \Z^d\to\R,\\[-8mm]
\end{align*}
is the discrete Laplacian, and the potential $(\xi(t,z)\colon t>0, z\in\Z^d)$ is a random field. 
This equation is known as the \emph{parabolic Anderson model}.\medskip

In the present paper we assume that the potential field is constant in time and independent, identically
distributed in space according to some nondegenerate distribution. Under this hypothesis 
the solutions are believed to exhibit \emph{intermittency}, which roughly speaking
means that at any late time the solution is concentrated in a small number of \emph{relevant islands} 
at large distance from each other, such that the diameter of each island is  much smaller than this distance,
see Figure~1 for a schematic picture. The relevant
islands are located in areas where the potential has favourable properties, e.g. a high density of 
large potential values. As time progresses new relevant islands emerge in locations further and further away from the origin 
at places where the potential is more and more favourable, while old islands lose their relevance. 
The main aim of the extensive research in this model, which was
initiated by G\"artner and Molchanov in~\cite{GM90, GM98}, is to get  a better understanding of the phenomenon of intermittency 
for  various choices of potentials.\medskip
\pagebreak[3]

\begin{figure}[htb]\label{fig1}\begin{center}\scalebox{0.4}{\input{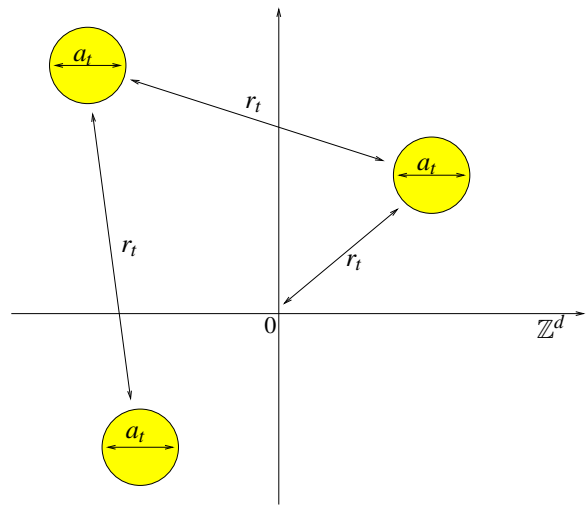}}\end{center} 
\caption{A schematic picture of intermittency: the mass of the solution is concentrated on relevant 
islands (indicated by shaded balls) with radius of order $a_t$ and distances of order $r_t \gg a_t$. }
\end{figure} 

Natural questions about the nature of intermittency are the following:
\begin{itemize}
\item What is the diameter of the relevant islands? Are they growing in time?
\item How much mass is concentrated in a relevant island?\\ How big is the potential on a relevant island? 
\item Where are the relevant islands located? What is the distance of different islands?
\item How many relevant islands are there?
\item How 
do new relevant islands emerge? What is the lifetime of a relevant island?
\end{itemize}

\pagebreak[3]

Explicit answers to these questions and, more generally, results on the precise geometry of solutions to the parabolic 
Anderson model are typically very difficult to obtain. In the related context of Brownian motion among Poissonian
obstacles, Sznitman~\cite{Sz98} provides methodology to study properties of Brownian paths conditioned on survival, which
offer a possible route to the geometry of solutions, at least in the case of bounded potentials. In a seminal paper 
G\"artner, K\"onig and Molchanov~\cite{GKM07} follow a different route to analyse size and position of relevant islands 
in the case of double exponential potentials. Their results also offer some insight into potentials with heavier tails. 
In~\cite{KLMS09} and~\cite{MOS09} a complete picture of the geometry of the solutions is given in the case of Pareto 
distributed potentials, building on the work of~\cite{GKM07}. In this case of an extremely heavy tailed potential it can 
be shown that, for any $\eps>0$  at sufficiently late times, there exists a single point carrying a proportion of mass 
exceeding $1-\eps$ with probability converging to one. This point constitutes the single relevant island and very  precise 
results about the location, lifetime and dynamics of this island can be obtained, see also ~\cite{M10} for a survey
of this research.

For more complicated potentials however, one has to rely on less explicit results. A natural way forward
is to investigate  the growth rates of the \emph{total mass}
$$U(t):= \sum_{z\in\Z^d} u(t,z)$$
of the solution. If the potential is bounded from above we define the (quenched) \emph{Lyapunov exponent} as
$$\lambda := \lim_{t\to\infty} L_t \mbox{ where } L_t:=\frac1t \log U(t),$$
whenever this limit exists in the almost sure sense. If the potential is unbounded one expects superexponential
growth and is interested in an asymptotic expansion of $L_t$. If the tails of the potential distribution are 
sufficiently light so that the logarithmic moment generating function
$$ H(x) := \log E e^{x \xi(0)}$$ 
is finite for all $x\ge 0$, a large deviation heuristics suggests that,we get
$$L_t =  \frac{H( \beta_t\alpha_t^{-d})}{\beta_t\alpha_t^{-d} } - \frac1{\alpha_t^2} \big( \kappa + o(1)\big), 
\mbox{ almost surely as $t\uparrow\infty$,}$$
where $\alpha, \beta$ are deterministic scale functions and $\kappa$ is a deterministic constant.  According to the
heuristics, the quantity $\alpha_t$ can be interpreted as the diameter of the relevant islands at time~$t$, and the 
leading term as the size of the potential values on the island. The constant $\kappa$ is given in terms of a
variational problem whose maximiser describes the shape of a vertically shifted and rescaled potential on an island.
More details and a classification of light-tailed potentials according to this paradigm are given in~\cite{HKM06}.
\pagebreak[3]

If the potential is such that the moment generating functions do not always exist, this approach breaks down. Indeed,
one can no longer expect the leading terms in an expansion of $L_t$ to be deterministic. Instead, one should expect 
the solutions to be concentrated in islands consisting of single sites and the expansion of $L_t$ to reflect fluctuations 
in the size of the potential on these sites. One would expect the sites of the islands to be those with the largest potential in some
time-dependent centred box and the fluctuations to be similar to those seen in the order statistics of independent random 
variables. This programme is carried out in detail in~\cite{HMS08} for potentials with Weibull (stretched exponential) and  
Pareto (polynomial) tails. In the present paper we add the case of standard exponential potentials and present weak 
(see Theorem~\ref{weak}) and almost sure (see Theorem~\ref{surely}) asymptotic expansions for $L_t$ in this case. These
results are taken from the first author's unpublished master thesis~\cite{L07} and were announced without proof in~\cite{HMS08}.
\smallskip

Very little has been done so far to get a precise understanding of the number and position of the relevant islands, 
the very fine results for the Pareto case being the only exception. A natural idea to approach this
with somewhat softer techniques is to prove a scaling limit theorem. To this end we define a probability 
distribution $\nu_t$ on $\Z^d$ associating to each site $z$ a weight proportional to the solution~$u(t,z)$, {i.e.}\\[-2mm]
$$\nu_t := \sum_{z\in \Z^d} \frac{u(t,z)}{U(t)}\, \delta(z), \mbox{ for any } t\ge 0, \\[-2mm]$$
where $\delta(z)$ denotes the Dirac measure concentrated at $z\in \R^d$.
For $a>0$, we also define the distribution of mass at the time~$t$ in the scale~$a$ as 
$$\nu^a_t:= \nu_t\big(\sfrac{\cdot}{a}\big)=  \sum_{z\in \Z^d} \frac{u(t,z)}{U(t)}\,\delta\big(\sfrac{z}{a}\big), \\[-3mm]$$ 
which is considered as an element of the  space ${\mathcal M}(\R^d)$ of probability measures on~$\R^d$. 
Identifying the scale $r_t$ of 
the distances between the islands and the origin, intermittency would imply that islands are contracted to points and
that $\nu_t^{r_t}$ converges in law to a random probability measure, which is purely atomic with atoms representing 
intermittent islands and their weights representing the proportion of mass on the islands. In the case of Pareto 
potentials such a result follows easily from the detailed geometric picture, see \cite[Proposition 1.4]{MOS09}, 
but in principle could be obtained from softer arguments. It therefore seems viable that scaling limit theorems
like the above can be obtained for a large class of potentials including some which are harder to analyse because
they have much lighter tails.
\smallskip

In Theorem~\ref{main} of the present paper we show that in the case of exponential potentials for 
$r_t=t/\log\log t$ the random probability  measures $\nu_t^{r_t}$ converge in distribution to a 
point mass in a nonzero random point.  
In particular this shows that for exponential potential we also have \emph{only one relevant island}. 
Moreover, the  solution of the parabolic Anderson problem spreads \emph{sublinearly} in space. 
Our arguments can be adapted to the easier case of Weibull, or stretched exponential, potentials, where there is 
also only one relevant island but the solution has a \emph{superlinear spread}.
%
These results are new and open up possibilities for further research projects, which we briefly mention in
our concluding remarks.
\pagebreak[3]

\subsection{Statement of results}

We now assume that $(\xi(z) \colon z\in\Z^d)$ is a family of independent random variables with
$$P\big(\xi(z)>x\big)=e^{-x} \mbox{ for $x\ge 0$}.$$
Suppose $(u(t,z) \colon t>0, z\in\Z^d)$ is the unique nonnegative solution to the
parabolic Anderson model with this potential, and let $(U(t)\colon t>0)$ be the total mass
of the solution. We recall that
$$L_t= \frac1t \log U(t)$$
and first ask for a weak expansion of~$L_t$ up to the first nondegenerate random term. This turns out
to be the third term in the expansion, which is of constant order. In the following we use $\Rightarrow$ to 
indicate convergence in distribution.
\smallskip

\begin{theorem}[Weak asymptotics for the growth rate of the total mass]\label{weak}\ \\
We have
$$L_t - d \log t + d \log\log\log t \Rightarrow X,$$
where $X$ has a Gumbel distribution
$$P(X\leq x) = \exp\big\{-2^d e^{-x+2d} \big\} \qquad \mbox{ for } x\in\R.$$
\end{theorem}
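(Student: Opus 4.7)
The plan is to combine the Feynman-Kac representation
\begin{equation*}
U(t)=\E_0\Big[\exp\Big(\int_0^t\xi(X_s)\,\dd s\Big)\Big]
\end{equation*}
with a spectral analysis of the Anderson Hamiltonian $H=\Deltad+\xi$ on a well-chosen finite box, in the spirit of \cite{HMS08}. First I would truncate: show that paths of the underlying random walk that exit a ball $B_{R_t}$ of radius $R_t$ slightly exceeding $t/\log\log t$ contribute negligibly, so that one may work with the Dirichlet-restricted quantity $U^{R_t}(t)$. Expanding in the eigenbasis of $H$ on $B_{R_t}$ gives
\begin{equation*}
U^{R_t}(t)\approx e^{t\lambda_1}\phi_1(0)\sum_{z\in B_{R_t}}\phi_1(z),
\end{equation*}
reducing the problem to a careful analysis of the top eigenpair.

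For i.i.d.\ exponential $\xi$ the top eigenvalue is dominated by the single site $z^\ast$ of maximal potential in $B_{R_t}$. A Green's-function perturbation yields $\lambda_1=\xi(z^\ast)-2d+O(1/\xi(z^\ast))$, and the associated eigenfunction decays exponentially from $z^\ast$ at rate $\sim\log\xi(z^\ast)$, so that $\phi_1(0)\sim\exp(-|z^\ast|\log\xi(z^\ast))$. Substituting back, one is led to
\begin{equation*}
L_t\approx \max_{z\in\Z^d}\Psi_t(z),\qquad \Psi_t(z):=\xi(z)-2d-\frac{|z|}{t}\log\xi(z).
\end{equation*}

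The limit law then comes from extreme-value theory applied to $\Psi_t$. Linearising $\log\xi(z)\approx\log\log t$ about the typical maximiser (where $\xi(z^\ast)\sim d\log t$) gives $\Psi_t(z)\approx\xi(z)-2d-\alpha_t|z|_1$ with $\alpha_t=\log\log t/t$. A Poisson computation using $\P(\xi(z)\ge x)=e^{-x}$ together with the lattice sum $\sum_{z\in\Z^d}e^{-\alpha|z|_1}\sim(2/\alpha)^d$ identifies the deterministic level $s_t$ at which $\E[\#\{z:\Psi_t(z)\ge s_t\}]=1$; the classical convergence of the maximum of i.i.d.\ $\mathrm{Exp}(1)$ variables to a Gumbel law then produces $L_t-d\log t+d\log\log\log t\Rightarrow X$ with the CDF stated in the theorem, the prefactor $2^d$ arising from the lattice sum.

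The main obstacle is the rigorous single-site reduction of $\lambda_1$ and $\phi_1$: for i.i.d.\ exponential potential the gap between the largest and second-largest values in $B_{R_t}$ is only $O(1)$, so one must carefully quantify that sub-dominant eigenvalues do not contribute to $L_t$ at the constant scale, and that nearby large potentials do not spoil the exponential-decay estimate for $\phi_1(0)$. Related technical difficulties are the truncation step (requiring moderate-deviation control for the walk reaching the optimal distance $\sim t/\log\log t$ in time $t$) and uniformity in $t$ of the extreme-value analysis, so that the Gumbel scaling indeed produces the precise shift claimed in the theorem.
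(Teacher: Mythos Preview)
Your spectral route is genuinely different from what the paper does. The paper never introduces eigenvalues or eigenfunctions; it sandwiches $L_t$ directly between two explicit variational problems by path-counting on the Feynman--Kac formula. The lower bound $\underline{N}(t)=\max_z\{\xi(z)-\frac{|z|}{t}\log\xi(z)\}$ is Lemma~\ref{lowbound}, and the upper bound (Lemma~\ref{upbound}) is obtained by decomposing paths according to the number of jumps $J_t$, discarding those that never visit one of the $\lfloor J_t^\sigma\rfloor$ highest-potential sites in their range (Lemma~\ref{nohits}), integrating out waiting times via Lemma~\ref{uppb}, and using that the high-potential sites are almost surely totally disconnected so that every path spends at least half its steps at sites of low potential. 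Both bounds are then shown separately (Lemmas~\ref{waN} and~\ref{wa}, via the distributional computation in Lemma~\ref{approx}) to satisfy the same Gumbel limit, which squeezes $L_t$.

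There is, however, a real gap in your reduction to the \emph{top} eigenpair. In the expansion $U^{R_t}(t)=\sum_k e^{t\lambda_k}\phi_k(0)\langle\phi_k,\mathbf 1\rangle$ the dominant summand is selected by $t\lambda_k+\log|\phi_k(0)|$, not by $\lambda_1$ alone. Eigenfunctions localized at sites closer to the origin carry much more mass at $0$, and since the top spectral gaps are only $O(1)$ (as you yourself flag), there are many competing eigenpairs whose contributions differ precisely at the constant scale you are trying to resolve. Your text passes from ``top eigenpair $\leadsto$ site $z^\ast$ of maximal $\xi$ in $B_{R_t}$'' to ``$L_t\approx\max_z\Psi_t(z)$'' without justification: the site of maximal potential in a fixed box is typically \emph{not} the maximizer of $\Psi_t$ (which balances potential against distance), and no choice of $R_t$ resolves this, because a single box has a single top site while $\Psi_t$ encodes a genuine trade-off. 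To make the spectral argument work you would have to keep the full eigenpair sum and show it is governed by $\max_k\{\lambda_k+\frac1t\log|\phi_k(0)|\}$, which essentially rebuilds the variational problem via uniform eigenfunction-decay estimates over all near-top eigenpairs; this is feasible but substantially more than the single-eigenpair heuristic you sketch, and it is exactly what the paper's path-based upper bound sidesteps by working with the maximum over sites from the start.
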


In an almost sure expansion already the second term exhibits
fluctuations.

\begin{theorem}[Almost sure asymptotics for the growth rate of the total mass] \label{surely}\ \\
Almost surely,
$$\limsup_{t \uparrow \infty} \frac{L_t - d \log t}{\log\log t}=1,$$
and
$$\liminf_{t \uparrow \infty} \frac{L_t - d \log t}{\log \log\log t}=-(d+1).$$
\end{theorem}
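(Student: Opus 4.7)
My strategy has three steps: reduce $L_t$ to a variational functional of the static potential $\xi$ via Feynman--Kac, analyse this functional by extreme-value statistics plus Borel--Cantelli, and interpolate from a sparse subsequence of times to all $t$ using the monotonicity $U(t+s)\geq U(t)$ that follows from $\xi\geq 0$. Using $U(t)=E_0\exp(\int_0^t\xi(X_s)\,ds)$, I would prove the almost-sure sandwich
\[
L_t \;=\; \max_{z\in\Z^d}\Bigl[\xi(z)-\frac{|z|}{r_t}\Bigr] + o(\log\log\log t),\qquad r_t:=t/\log\log t.
\]
The lower bound uses the strategy ``reach $z$ at time $\tau\asymp |z|/\log\log t$ and stay'': the random-walk large-deviation cost $|z|\log(|z|/(e\tau))$ becomes exactly the penalty $|z|/r_t$ after cancellation with $\tau\xi(z)$. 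The upper bound uses a path decomposition indexed by the most visited site combined with the heat-kernel estimate $\log p_\tau(0,z)\leq -|z|\log(|z|/(e\tau))$. Since the maximiser lies at $|z|\asymp d r_t$, writing $M_r:=\max_{|z|\leq r}\xi(z)$ and $G_t:=M_{dr_t}-d\log(dr_t)$, we obtain $L_t - d\log t = G_t - d\log\log\log t + O(1)$.

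\textbf{Limsup at scale $\log\log t$.} For $\limsup\leq 1$, a dyadic shell decomposition $S_k=\{2^k\leq|z|<2^{k+1}\}$ and the first Borel--Cantelli lemma applied to $\P(\max_{S_k}\xi > dk\log 2 + (1+\eps)\log k)\leq k^{-(1+\eps)}$ give $\max_{S_k}\xi \leq dk\log 2 + (1+\eps)\log k$ eventually; optimising against the penalty $2^k/r_t$ yields $G_t\leq (1+\eps)\log\log t + O(1)$. For $\limsup\geq 1$, along a sparse sequence $t_n$ with doubling annuli $A_n=B_{r_{t_{n+1}}}\setminus B_{r_{t_n}}$, the probability that $\max_{A_n}\xi - d\log r_{t_{n+1}} > (1-\eps)\log n$ is of order $n^{-(1-\eps)}$, divergent, and the second Borel--Cantelli lemma (using independence across disjoint annuli) produces the fluctuation infinitely often.

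\textbf{Liminf at scale $\log\log\log t$.} The thin Gumbel lower tail $\P(G\leq -y)\approx \exp(-e^y)$ explains the smaller scale. For $\liminf \geq -(d+1)$, take $t_k=e^k$: then $\log\log t_k=\log k$, and by the first Borel--Cantelli lemma applied to the summable $\exp(-(\log(k+1))^{1+\eps})$, almost surely $G_{r_{t_{k+1}}} \geq -(1+\eps)\log\log\log t_{k+1}$ for all large $k$. Using $L_t\geq \xi(z^*)-|z^*|/r_t+o(1)$ with $z^*$ a maximiser over $B_{r_{t_{k+1}}}$, and the bound $|z^*|/r_t\leq e$ for $t\in[t_k,t_{k+1}]$, this yields $L_t\geq d\log t-(d+1+\eps)\log\log\log t$ eventually. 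For $\liminf \leq -(d+1)$, I would find times $t$ where $\xi$ is uniformly suppressed across the continuum of relevant scales near $dr_t$, so that the variational problem admits no cheaper alternative optimiser; a careful calculation using independence of disjoint annuli produces a divergent probability sum along a suitable sparse sequence, and BC2 concludes. The constant $d+1$ decomposes as $d$ from the deterministic shift $-d\log\log\log t$ plus $+1$ from the Gumbel depth.

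\textbf{Main obstacle.} The crux is the upper bound on the liminf: controlling the joint probability of the suppression event across a continuum of scales, while maintaining enough independence for the second Borel--Cantelli lemma, requires delicate geometric and probabilistic bookkeeping, and is where matching the exact constant $d+1$ is most at stake. The extension from sparse subsequences to all $t$ in the other direction is a secondary technicality handled by the monotonicity of $U$.
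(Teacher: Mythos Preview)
Your overall strategy matches the paper's: sandwich $L_t$ by variational functionals of $\xi$ and apply extreme-value analysis plus Borel--Cantelli. The limsup claim (which the paper does not reprove, deferring to \cite{HMS08}) and the liminf lower bound are fine in outline; for the latter the paper optimises the lower-bound functional $\underline{N}(t)=\max_z\{\xi(z)-\tfrac{|z|}{t}\log\xi(z)\}$ directly rather than interpolating along $t_k=e^k$, but either route works.

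The gap is in the liminf upper bound, and it is twofold. First, the intermediate reduction $L_t-d\log t=G_t-d\log\log\log t+O(1)$ with $G_t=M_{dr_t}-d\log(dr_t)$ is not valid at this precision: the maximiser of $\max_z[\xi(z)-|z|/r_t]$ is not pinned to the single radius $dr_t$, and an exceptionally large potential value at some radius $ar_t$ with $a\neq d$ can lift the variational maximum above $M_{dr_t}-d$ by an amount of order $\log\log r_t$, swamping the $\log\log\log t$ scale you want to resolve. Second, even staying with the full variational functional, it depends on $\xi$ over \emph{nested} balls as $t$ grows, so along any subsequence the events $\{\text{max small}\}$ are not independent and BC2 is unavailable. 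Your phrase ``suppression across a continuum of relevant scales'' reflects exactly this difficulty, but the paper sidesteps it rather than confronting it.

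The paper's resolution is to prove the upper bound on $L_t$ as a maximum over an \emph{annulus}: $L_t\leq \overline{N}_c(t)-2d+\eps$ with
\[
\overline{N}_c(t)=\max_{t/(\log t)^2\leq|z|\leq t\log t}\Big\{\xi(z)-\tfrac{|z|}{t}\,\big(\log\log|z|+c\big)\Big\}.
\]
Establishing this annular restriction is itself nontrivial and requires first eliminating paths with too few or too many jumps via separate Feynman--Kac estimates. Once it is in place, one takes $t_n=\exp(n^2)$ and checks that the annuli $[t_n/(\log t_n)^2,\,t_n\log t_n]$ are pairwise disjoint for large $n$; hence the $\overline{N}_c(t_n)$ are exactly independent. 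A single Gumbel-type tail computation then yields
\[
P\bigl(\overline{N}_c(t_n)\leq d\log t_n-(d+1-c)\log\log\log t_n\bigr)\;\geq\;1/n,
\]
so BC2 applies directly. There is no continuum of scales to control jointly: the annular upper bound localises the dependence, and the obstacle you anticipated does not arise.
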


\begin{remark}
Note that neither of these almost sure asymptotics agree with the asymptotics
$$\lim_{t \uparrow \infty} \frac{L_t - d \log t}{\log \log\log t}=-d \quad \mbox{ in probability,}$$ 
which follows from Theorem~\ref{weak}. The almost sure results pick up
fluctuations on both sides of the second term in the weak expansion, with those above being significantly stronger 
than those below the mean. This is different in the stretched exponential case studied in~\cite{HMS08}, where the liminf
behaviour coincides with the weak limit behaviour. The limsup behaviour in the exponential case is included in the 
results of~\cite{HMS08} and therefore not proved here.
\end{remark}

\pagebreak

Recall that the distribution of the mass of the solution at time $t>0$ and on the scale $a>0$ is defined as
a (random) element of the  space ${\mathcal M}(\R^d)$ of probability measures on $\R^d$ by
$$\nu^a_t:= \nu_t\left(\sfrac{\cdot}{a}\right)=  \sum_{z\in \Z^d} \frac{u(t,z)}{U(t)} \, \delta\big(\sfrac{z}{a}\big).$$
The following theorem is the main result of this paper.
\smallskip

\begin{theorem}[Scaling limit theorem]\label{main}
Defining the sublinear scale function
$$r_t=\frac{t}{\log\log t},$$
we have
$$\lim_{t\uparrow \infty} \nu_t^{r_t}= \delta(Y) \mbox{ in distribution,}$$
where $\delta(x)$ denotes the Dirac measure concentrated in $x\in\R^d$ and $Y$ is a random variable 
in $\R^d$ with independent coordinates given by  standard exponential variables with uniform random sign.
\end{theorem}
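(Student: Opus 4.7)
The proof goes in three conceptual steps. First, via the Feynman--Kac representation
$$u(t,z)=\E_0\Big[\exp\Big(\int_0^t\xi(X_s)\,ds\Big)\,\one_{\{X_t=z\}}\Big],$$
I would show that $\log u(t,z)=\F_t(z)+o(t)$ uniformly over $z$ in a centred box of side $O(t)$, with
$$\F_t(z)\;:=\;t\xi(z)-|z|_1\log\xi(z)-2dt.$$
The lower bound comes from restricting to paths that traverse a geodesic to $z$, arriving at the optimal time $s^\star=|z|_1/\xi(z)$ (which trades off the travel time against the time available to accumulate potential at $z$) and then remaining at $z$ until time $t$; a Stirling estimate for $|z|_1!$ then produces the functional $\F_t$. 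The matching upper bound, which is the delicate step, is obtained by decomposing the Feynman--Kac expectation according to the local-time profile of $X$ and showing that trajectories visiting more than one ``high-$\xi$'' site are subdominant.

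Second, the scale $r_t=t/\log\log t$ is forced by the identity $r_t\log\log r_t\sim t$: setting $y=z/r_t$ and $\tilde\xi(z):=\xi(z)-d\log r_t$, one verifies
$$\F_t(z)\;=\;td\log r_t-2dt+t\bigl(\tilde\xi(z)-|y|_1\bigr)+o(t),$$
uniformly for $|y|_1$ and $\tilde\xi(z)$ of order~$1$. The marked point process $\{(z/r_t,\tilde\xi(z)):z\in\Z^d\}$ converges to a Poisson point process $\Pi$ on $\R^d\times\R$ of intensity $e^{-x}\,dx\,dy$, the standard extreme-value limit for i.i.d.\ exponential marks on a lattice rescaled to constant density. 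Since $\int\one_{\{x-|y|_1>m\}}e^{-x}\,dx\,dy=2^d e^{-m}<\infty$ for every $m\in\R$, the supremum $M^\star=\sup_{(y,x)\in\Pi}(x-|y|_1)$ is almost surely attained at a unique point $(Y,X^\star)$, and a Palm-type calculation in $\Pi$ gives $Y$ the density $2^{-d}e^{-|y|_1}=\prod_{i=1}^d\tfrac12 e^{-|y_i|}$, that is, independent Laplace coordinates, which is exactly the distribution in the theorem.

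Third, let $Z_t^\star$ be the maximiser of $\F_t$ and $G_t$ the gap between the top two values of $\tilde\xi(z)-|z/r_t|_1$; by the previous step $G_t\Rightarrow G>0$ almost surely. Combined with the Feynman--Kac asymptotic, this gives for any $z\neq Z_t^\star$ in the relevant box
$$\frac{u(t,z)}{u(t,Z_t^\star)}\;\leq\;\exp\bigl(-tG_t+o(t)\bigr),$$
and summing over the polynomially many sites in the box yields $u(t,Z_t^\star)/U(t)\to 1$ in probability. Contributions from $z$ outside the box are controlled by crude large-deviation bounds on $p_t(0,z)$ together with extremal estimates on $\max_{|z|_1\leq R}\xi(z)$. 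Since $Z_t^\star/r_t\Rightarrow Y$ and the map $y\mapsto\delta(y)$ is continuous into $\mathcal M(\R^d)$, the continuous mapping theorem concludes $\nu_t^{r_t}\Rightarrow\delta(Y)$. The main obstacle is the upper bound $\log u(t,z)\leq \F_t(z)+o(t)$: the error must be genuinely $o(t)$ uniformly in $z$, since any slip to $O(t)$ would swamp the $O(t)$-gap produced by the Poisson limit and collapse the concentration argument. Ruling out paths that exploit two or more moderately high sites, rather than the single ``straight-then-stay'' strategy, is what makes the exponential case markedly harder than the Pareto or Weibull cases treated in earlier work.
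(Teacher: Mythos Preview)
Your extreme-value analysis in Step~2 is correct and matches the paper: the point process of rescaled potential values converges to a Poisson process, the maximiser $Y$ has the Laplace density $2^{-d}e^{-|y|_1}$, and the gap between the top two values of the functional is nondegenerate. The paper obtains precisely this in its Section on the variational problem.

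The gap is in Steps~1 and~3. The pointwise claim $\log u(t,z)=\Phi_t(z)+o(t)$ \emph{uniformly in $z$} is false, and the concentration argument in Step~3 collapses with it. The Feynman--Kac formula reads $u(t,z)=\E_0[\exp(\int_0^t\xi(X_s)\,ds)\one\{X_t=z\}]$: what matters is the potential along the whole path, not the potential at the endpoint. If $z$ is a neighbour of the optimal site $Z_t^\star$, the path that runs to $Z_t^\star$, sits there until time $t-\epsilon$, and then hops to $z$ contributes essentially $e^{\Phi_t(Z_t^\star)}$ to $u(t,z)$, which is far larger than $e^{\Phi_t(z)}$ since $\xi(z)$ is typically $O(1)$. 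Hence your bound $u(t,z)/u(t,Z_t^\star)\le\exp(-tG_t+o(t))$ fails for all $z$ in a neighbourhood of $Z_t^\star$, and the conclusion $u(t,Z_t^\star)/U(t)\to1$ is too strong: the mass is not concentrated at a single lattice point but in a ball of radius $o(r_t)$, which only becomes a Dirac mass after rescaling by $r_t$.

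The paper avoids this by never attempting a pointwise asymptotic for $u(t,z)$. Instead it decomposes the total mass $U(t)$ according to \emph{path} properties: the number of jumps $J_t$, whether the path ever visits one of the $k_n$ highest potential sites in its range, whether it visits the specific site $X_t^{(1)}$, and whether its endpoint lies within $\delta r_t$ of $X_t^{(1)}$. The key estimate (the paper's Proposition on fine upper bounds) controls the contribution of each class of paths by a variational expression of the form $M_n^{(k)}-\frac{|Z_n^{(k)}|}{t}\log\log|Z_n^{(k)}|-2d$; comparison with the lower bound then kills every class except the paths that hit $X_t^{(1)}$ and end within $\delta r_t$ of it. This path-based decomposition is exactly what replaces your pointwise formula, and it is where the work of ``ruling out paths that exploit two or more moderately high sites'' actually lives.
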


\begin{remark}
In the case of a Weibull potential with parameter $0<\gamma<1$ given by
$$P\big(\xi(z)>x\big)=e^{-x^\gamma} \mbox{ for $x\ge 0$,}$$
a variant of the proof gives convergence of $\nu_t^{r_t}$ for the superballistic scale function
$$r_t=\frac{t (\log t)^{\frac1{\gamma}-1}}{\log\log t},$$
to a limit measure $\delta(Y)$ where the components of $Y$ are independent
exponentially distributed with parameter~$d^{1-1/\gamma}$ and uniform sign.
Details are left to the reader.
\end{remark}

\section{Proof of the main results}

\subsection{Overview}

The proofs are based on the Feynman-Kac formula 
$$u(t,z)=\E\Big[\exp\Big\{\int_0^t\xi(X_s) \, ds \Big\} \one\{X_t=z\}\Big],$$
where $(X_s \colon s\ge 0)$ is a continuous-time simple random walk on~$\Z^d$ started 
at the origin and the probability $\P$ and expectation~$\E$ refer only to this walk 
and not to the potentials. Recall that $(X_s \colon s\ge 0)$ is the Markov process generated 
by the discrete Laplacian~$\Delta$ featuring in the parabolic Anderson problem. It is shown
in~\cite{GM90} that the Feynman-Kac formula gives the unique solution to the parabolic Anderson problem
under a moment condition on the potential, which is satisfied in the exponential case. 
By summing over all sites the Feynman-Kac  formula implies that the total mass is given by
$$U(t)=\E\Big[\exp\Big\{\int_0^t\xi(X_s) \, ds \Big\} \Big].$$
An analysis of this formula allows us to approximate $L_t=\frac1t \log U(t)$ almost
surely from above and below by variational problems for the potential. These variational
problems have the structure that one optimizes over all sites $z\in\Z^d$ the difference
between the potential value $\xi(z)$, corresponding to the reward for spending time in
the site, and a term corresponding to the cost of getting to the site, which is going
to infinity when $z\to\infty$ and thus ensure that the problem is well-defined. 
\smallskip

We can use the result for the lower bound given in \cite[Lemmas 2.1 and 2.3]{HMS08}. Here
and throughout this paper we use $|\,\cdot\,|$ to denote the $\ell^1$-norm on $\R^d$.

\begin{lemma}[Lower bound on $L_t$]\label{lowbound}
Let
$$\Nlt := \max_{z\in\Z^d} \Big\{ \xi(z)-\frac{|z|}{t}\log \xi(z)\Big\},$$
then, almost surely, for all sufficiently large~$t$, we have
$$L_t\ge \Nlt-2d+o(1).$$
\end{lemma}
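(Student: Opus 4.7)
\medskip

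\noindent\textbf{Proof plan.} My strategy is to obtain the lower bound pointwise, for any candidate site $z\in\Z^d$, by restricting the Feynman--Kac expectation to a single, very explicit family of random walk paths that travel along a geodesic to $z$ and then park there for the remainder of the time interval. Fix $z\in\Z^d$ and fix a nearest-neighbour geodesic $0=z_0,z_1,\dots,z_{|z|}=z$. Let $\sigma=\sigma_z>0$ be a parameter (to be tuned) and consider the event
$$A_z=\big\{X \text{ makes its first }|z|\text{ jumps along the geodesic with } \tau_1+\cdots+\tau_{|z|}\le \sigma, \text{ and } \tau_{|z|+1}>t-\sigma\big\},$$
where the $\tau_i$ are the holding times. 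Since $\xi\ge 0$, on $A_z$ we have $\int_0^t\xi(X_s)\,ds\ge\xi(z)(t-\sigma)$, so
$$U(t)\ge\E\bigl[\ee^{\int_0^t\xi(X_s)\dd s}\one_{A_z}\bigr]\ge \ee^{\xi(z)(t-\sigma)}\P(A_z).$$
Because the walk jumps at total rate $2d$ and chooses each neighbour with probability $1/(2d)$, the probability $\P(A_z)$ factors as $(2d)^{-|z|}\,\P(\G_{|z|}\le\sigma)\,\ee^{-2d(t-\sigma)}$, where $\G_n$ is a sum of $n$ i.i.d.\ $\mathrm{Exp}(2d)$ random variables.

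The next step is the elementary large-deviation estimate: for $n\ge 1$ and $0<\sigma<n/(2d)$,
$$\P(\G_n\le\sigma)\ge \frac{(2d\sigma)^n}{n!}\ee^{-2d\sigma},$$
obtained by keeping only the first term of the Poisson series, so that by Stirling
$$\log\P(\G_n\le\sigma)\ge n\log(2d\sigma/n)+n-O(\log n).$$
Substituting this and taking $\log$ of the bound on $U(t)$ gives, with $n=|z|$,
$$\log U(t)\ge \xi(z)(t-\sigma)-|z|\log(2d)+|z|\log(2d\sigma/|z|)+|z|-2d(t-\sigma)-O(\log|z|).$$
Now I make the balancing choice $\sigma_z=|z|/\xi(z)$; the terms involving $\log(2d)$ cancel and the $|z|/t$ contributions collapse neatly, yielding
$$\tfrac{1}{t}\log U(t)\ge \xi(z)-\tfrac{|z|}{t}\log\xi(z)-2d-\tfrac{\xi(z)\sigma_z}{t}-\tfrac{O(\log|z|)}{t}.$$
Since $\xi(z)\sigma_z/t=|z|/t$, this error (and the logarithmic one) will be $o(1)$ provided $|z|=o(t)$ and $\xi(z)$ is not too small; these are exactly the regimes that matter.

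It remains to take the maximum over $z$ and argue that the correction is uniformly $o(1)$ over the sites that realise $\Nlt$. The main obstacle is this uniformity, since $\Nlt$ is itself random and a priori attained at unknown points. The standard remedy, carried out in~\cite[Lemmas 2.1 and 2.3]{HMS08}, is to show by a direct extreme-value argument on the i.i.d.\ exponential field that, almost surely for large $t$, the maximiser of $\Nlt$ lies in a box of polynomial radius $t^{C}$ and satisfies $\xi(z)\gtrsim \log t$; both statements are easy consequences of Borel--Cantelli using $\P(\xi(z)>x)=\ee^{-x}$ and a union bound over the relevant box. On this restricted set the correction terms $|z|/t$ and $(\log|z|)/t$ are $o(1)$ uniformly, and we conclude
$$L_t\ge\max_{z}\Bigl\{\xi(z)-\tfrac{|z|}{t}\log\xi(z)\Bigr\}-2d+o(1)=\Nlt-2d+o(1),$$
as claimed. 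Since Lemma~\ref{lowbound} is quoted verbatim from~\cite{HMS08}, the only substantive work specific to the exponential case is checking that their extreme-value input still applies, which is immediate from the tail $\P(\xi(z)>x)=\ee^{-x}$.
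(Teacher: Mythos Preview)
The paper does not give its own proof of this lemma; it is simply quoted from \cite[Lemmas~2.1 and~2.3]{HMS08}, as you correctly observe at the end of your sketch. Your argument reproduces the standard geodesic--path lower bound used there and is essentially correct.

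One minor slip worth flagging: when you apply Stirling to $\P(\G_n\le\sigma)\ge (2d\sigma)^n e^{-2d\sigma}/n!$ you drop the $-2d\sigma$ term, so the display for $\log U(t)$ should have $-2dt$ rather than $-2d(t-\sigma)$. Done carefully, after substituting $\sigma=|z|/\xi(z)$ the $\pm|z|$ contributions cancel exactly and the only correction is $O(\log|z|)/t$, not the $|z|/t$ you record. This matters for your uniformity step: a polynomial box $|z|\le t^C$ controls $(\log|z|)/t$ but would not control $|z|/t$; conversely, with the correct error term you do not need the sharper (though true) localisation $|z|=o(t)$ of the maximiser. So the algebraic slip and the looseness in the uniformity claim happen to compensate, but it is cleaner to fix the former.
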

 
The appearance of $\xi(z)$ in the cost term can be explained by the fact that part of the cost
arises from the fact that the optimal paths leading to~$z$ spend a positive proportion of the overall time
traveling to the site and therefore miss out on the optimal potential value for some considerable time, 
see Section~1.3 in \cite{HMS08} for a heuristic derivation of this formula.%
\medskip%

\pagebreak[3]

The corresponding upper bound will be our main concern here. 

\begin{lemma}[Upper bound on $L_t$]\label{upbound}
For any $c>0$ let
$$\overline{N}_c(t) := \max_{t/(\log t)^2\le |z|\le t\log t}
\Big\{ \xi(z)-\frac{|z|}{t} \, \big(\log\log |z|+c\big) \Big\}.$$
Then, for any $\eps>0$ there exists $c=c(\eps)>0$ such that, almost surely, 
for all sufficiently large~$t$, 
we have $$L_t\le \overline{N}_{c(\eps)}(t)-2d+\eps+o(1).$$
\end{lemma}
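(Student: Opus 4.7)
The plan is to bound $U(t)=\E[\exp\{\int_0^t \xi(X_s)\,ds\}]$ from above by combining random-walk tail estimates with a spectral analysis of the operator $\Delta+\xi$. The target expression suggests attributing, to each site $z$ in the intermediate range, a Feynman-Kac contribution of order $\exp\{t(\xi(z)-2d)-|z|(\log\log|z|+c)\}$, where the summand $-2d$ is the diagonal of the Laplacian while the cost $|z|(\log\log|z|+c)$ should arise as the decay rate of the Green's function of $\Delta+\xi$ from $0$ to~$z$.

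First I would perform the range truncation. Standard large-deviation estimates for the continuous-time simple random walk show that paths leaving the ball $\{|z|\le t\log t\}$ by time $t$ contribute a superexponentially small amount to $U(t)$, which is absorbed into $o(1)$ on the almost sure event that the maximum of $\xi$ in this ball is $O(\log t)$. For paths confined to the inner ball $\{|z|\le t/(\log t)^2\}$, the maximum potential is almost surely at most $d\log t-2d\log\log t+o(\log\log t)$, strictly below the typical value $\overline{N}_{c(\eps)}(t)$ in the intermediate range, so these short-range paths are harmless.

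The heart of the argument is the estimate in the intermediate range. For each $z$ with $t/(\log t)^2\le|z|\le t\log t$, I would use a Dirichlet/spectral bound on a box around~$z$ to write $u(t,z)\le C\,t^{d}\,\exp\{t\lambda_z-r(0,z)\}$, where $\lambda_z$ is the principal eigenvalue of $\Delta+\xi$ localised near~$z$ and $r(0,z)$ is the logarithmic decay of the associated eigenfunction from $z$ to~$0$. Perturbation theory, exploiting that $\xi(z)$ dominates its neighbours on almost every realisation, gives $\lambda_z=\xi(z)-2d+o(1)$, accounting for the $-2d$ in the statement. For $r(0,z)$, iterating the eigenfunction identity $\phi_z(y)=(\lambda_z-\xi(y)+2d)^{-1}\sum_{y'\sim y}\phi_z(y')$ along a minimum-length path from $z$ to $0$ yields a bound $\phi_z(0)/\phi_z(z)\lesssim \xi(z)^{-|z|}$, so that using $\xi(z)\approx d\log|z|$ at the maximiser one obtains $r(0,z)\ge |z|(\log\log|z|+c)$ provided $c=c(\eps)$ is chosen large enough to absorb the perturbation error and the branching factors. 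Summing over the polynomially many sites and taking $(1/t)\log$ yields the claimed bound $L_t\le\overline{N}_{c(\eps)}(t)-2d+\eps+o(1)$.

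The main obstacle will be to justify the resolvent/eigenfunction expansion uniformly over $z$: the bound $\phi_z(0)\lesssim\xi(z)^{-|z|}$ fails if intermediate potentials $\xi(x_i)$ along a shortest path cluster close to $\xi(z)$, so one must truncate the (rare) event that many atypically large values of $\xi$ lie within a given box. An almost-sure version of the lemma is then obtained by a Borel-Cantelli argument along a geometric sequence of times, using quantitative estimates on the second-order statistics of $\xi$ in boxes of growing radius, much in the spirit of the arguments in~\cite{HMS08} for Weibull and Pareto potentials.
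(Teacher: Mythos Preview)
Your proposal has a genuine conceptual gap in the ``heart of the argument''. You claim a per-site bound
\[
u(t,z)\le C\,t^d\exp\{t\lambda_z-r(0,z)\},\qquad \lambda_z=\xi(z)-2d+o(1),
\]
for \emph{every} $z$ in the intermediate range, and then sum over $z$. But this bound is false for typical endpoints~$z$. The solution $u(t,z)=\langle e^{t(\Delta+\xi)}\delta_0,\delta_z\rangle$, restricted to any box $B\ni 0,z$, is controlled by the \emph{global} principal eigenvalue $\lambda_1(B)\approx\max_{B}\xi-2d$, not by $\xi(z)-2d$. Paths that end at a site $z$ with $\xi(z)=O(1)$ still pick up the contribution $e^{t\lambda_1(B)}$ by spending almost all their time at the box maximiser and only jumping to $z$ at the end. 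Your eigenfunction iteration likewise produces the decay of the \emph{global} eigenfunction, which is localised near the box maximiser, not near $z$; so $r(0,z)$ depends on the distance from $0$ to that maximiser, not on~$|z|$. In short, the variable $z$ in the definition of $\overline N_c(t)$ is not the endpoint of the walk but the site of maximal potential visited by the walk, and your decomposition by endpoint does not see this.

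The paper takes a different route. It decomposes $U(t)$ according to the number of jumps $J_t=n$, so that the accessible region is the ball of radius~$n$ and the relevant site is $Z_n^{\ssup 1}$. After discarding $n\notin[t/(\log t)^2,\,t\log t]$ and paths that never reach one of the top $k_n=n^\sigma$ sites (both by crude bounds), the waiting times are integrated out explicitly via the elementary inequality
\[
\int_{\{\sum t_i<t\}}\exp\Big\{\sum_{i<n}t_i\eta_i+\big(t-\sum t_i\big)\eta_n\Big\}\,dt\le e^{t\eta_{k}}\prod_{i\ne k}\frac{1}{\eta_k-\eta_i},
\]
where $\eta_k=\max_i\eta_i$. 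The product is then bounded using the gap $M_n^{\ssup{k_n}}-M_n^{\ssup{m_n}}\asymp\log n$ together with the combinatorial fact that the set $G_n$ of the top $m_n=n^\rho$ sites is almost surely totally disconnected, so at least half the steps of any path lie in $G_n^{\rm c}$. This yields the cost $\frac{n}{2t}(\log\log n-C_\eps)$ directly, and after taking the maximum over $n$ one obtains $\overline N_{c(\eps)}(t)$ with $c(\eps)=C_\eps$. No spectral theory or Borel--Cantelli argument along a time sequence is needed.
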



This lemma will be proved in two steps: We first remove paths that do not make an essential
contribution from the average in the Feynman-Kac formula using an ad-hoc approach, see
Lemma~\ref{nohits} and Lemma~\ref{jumps}. Then we 
use the properties of the remaining paths to refine the argument and get an improved bound, see
Proposition~\ref{fineupper}.
\medskip

The variational problems for the upper and lower bound can then be studied using an extreme value 
analysis, which follows along the lines of~\cite{HMS08}. It turns out that the weak and almost sure 
asymptotics of the two problems coincide up to the accuracy required to 
prove Theorem~\ref{weak} and  Theorem~\ref{surely}.
\medskip

For the proof of the scaling limit we need to give an upper bound on the growth rate of the 
contribution of all those paths ending in a site at distance more than $\delta r_t$, for some $\delta>0$, 
from the site with the largest potential among those sites that can be reached by some path with the 
same number of jumps. This bound needs to be strictly better than the lower bound on the overall growth rate. 
To this end, in a first step, we again use Lemma~\ref{nohits} and Lemma~\ref{jumps} to eliminate some paths 
using ad-hoc arguments. In the second step we remove paths that never hit the site with largest potential 
that is within their reach. This is  done on the basis of the gap between the largest and the second largest 
value for the variational problem in the upper bound. In the third step it remains to analyse the contribution 
of paths that  hit the optimal site but then move away by more than $\delta r_t$. Again it turns out that the 
rate of growth of the contribution of these paths is strictly smaller than the lower bound on the growth rate 
of the total mass. Proposition~\ref{fineupper} is set up in such a way that it can deal with both the second
and third step. We conclude from this that the solution is concentrated in a single island of diameter at 
most~$\delta r_t$ around the optimal site. An extreme value analysis characterizes the location of the 
optimal site and concludes the proof of Theorem~\ref{main}.
\medskip

The remainder of the paper is structured as follows: In Section~\ref{se.aux} we give some notation and
collect auxiliary results from \cite{HMS08}. Section~\ref{se.upper} contains the required upper bounds
and constitutes the core of the proof. Section~\ref{se.vari} studies the variational problem arising in the
upper bound. Using these approximations we complete the proof of Theorem~\ref{surely} in Section~\ref{se.surely} 
and of Theorem~\ref{weak} in Section~\ref{se.weak}. The proof of the scaling limit theorem,  Theorem~\ref{main}, 
is completed in Section~\ref{se.main}.\\[-5mm]

\subsection{Auxiliary results}\label{se.aux}

Let $B_r=\{|z|\le r\}$ be the ball of radius $r$ centered at the origin in $\Z^d$.  The number~$l_r$ of points in $B_r$ grows asymptotically 
like $r^d$. More precisely, there exists a constant $\kappa_d$ such that, $\lim_{r\rightarrow\infty}l_rr^{-d}=\kappa_d$. 
We define $M_r=\max_{|z|\le r} \xi(z)$ to be the maximal value of the potential on $B_r$.
The behavior of $M_r$ is described quite accurately in \cite[Lemma~4.1]{HMS08}, which we restate now.

\begin{lemma}[Bounds for $M_r$]
\label{m_extreme}
Let $\delta\in (0,1)$ and $c>0$. Then, almost surely,
\begin{align*}
\begin{array}{rcll}\displaystyle
M_r&\le&d\log r+\log\log r+(\log\log r)^{\delta}
&\text{ for all sufficiently large } r,\\
M_r&\ge&d\log r-(1+c)\log\log\log r&\text{ for all sufficiently large }r.\\
\end{array}
\end{align*}
In particular, for any pair of constant $c_1$ and $c_2$ satisfying $c_1<d<c_2$, we have
$$c_1\log r \leq M_r \leq c_2\log r \quad \mbox{ for all sufficiently large } r.$$
\end{lemma}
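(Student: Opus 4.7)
The plan is to exploit that $(\xi(z) : z \in \Z^d)$ are i.i.d.\ standard exponential random variables, so that $M_r$ is the maximum of $l_r \sim \kappa_d r^d$ such variables. I would combine the union bound $\P(M_r > x) \le l_r e^{-x}$ and the complementary inequality $\P(M_r \le x) = (1 - e^{-x})^{l_r} \le \exp(-l_r e^{-x})$, applied along a geometric subsequence $r_n := 2^n$, with the Borel--Cantelli lemma and the monotonicity of $r \mapsto M_r$ to extend the subsequence bounds to all sufficiently large $r$.

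For the upper bound, set $u_n := d\log r_n + \log\log r_n + \alpha \log\log\log r_n$ for a fixed $\alpha > 1$. The union bound gives
$$\P(M_{r_n} > u_n) \,\le\, l_{r_n}\, e^{-u_n} \,=\, O\bigl((\log r_n)^{-1} (\log\log r_n)^{-\alpha}\bigr) \,=\, O\bigl(n^{-1} (\log n)^{-\alpha}\bigr),$$
which is summable since $\alpha > 1$. Borel--Cantelli yields $M_{r_n} \le u_n$ almost surely for all large $n$, and for $r \in [r_n, r_{n+1}]$ monotonicity gives
$$M_r \,\le\, M_{r_{n+1}} \,\le\, u_{n+1} \,\le\, d\log r + d\log 2 + \log\log r + \alpha \log\log\log r + o(1).$$
Since $(\log\log r)^{\delta} \to \infty$ while $d\log 2 + \alpha \log\log\log r = o\bigl((\log\log r)^{\delta}\bigr)$ for any $\delta > 0$, the asserted upper bound follows for all sufficiently large $r$.

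For the lower bound, pick an auxiliary constant $c' \in (0,c)$, for instance $c' = c/2$, and set $v_n := d\log r_n - (1 + c')\log\log\log r_n$. Then $l_{r_n} e^{-v_n} \sim \kappa_d (\log\log r_n)^{1+c'} \to \infty$, giving
$$\P(M_{r_n} \le v_n) \,\le\, \exp\bigl(-\kappa_d (1 + o(1))(\log n)^{1 + c'}\bigr),$$
which is summable in $n$. Borel--Cantelli yields $M_{r_n} > v_n$ almost surely for large $n$, and for $r \in [r_n, r_{n+1}]$ monotonicity together with $\log\log\log r_n \le \log\log\log r$ gives $M_r \ge M_{r_n} \ge d\log r - d\log 2 - (1 + c')\log\log\log r$. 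Since $(c - c')\log\log\log r \to \infty$, the constant $d \log 2$ is absorbed into this gap for all sufficiently large $r$, yielding $M_r \ge d\log r - (1 + c)\log\log\log r$. The ``in particular'' statement is then immediate, since $\log\log r$ and $\log\log\log r$ are both $o(\log r)$.

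The main technical obstacle is a matter of bookkeeping rather than principle: one must choose $\alpha > 1$ and $c' < c$ carefully so that the Borel--Cantelli sums along $\{r_n\}$ converge and, simultaneously, the constant $d \log 2$ lost when interpolating between consecutive subsequence bounds is swallowed by the gap between the subsequence bound and the stated asymptotics. No probabilistic input beyond the standard extreme-value calculation for i.i.d.\ exponentials is needed.
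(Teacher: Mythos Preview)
Your argument is correct and is the standard extreme-value computation for i.i.d.\ exponentials: union bound and $(1-e^{-x})^{l_r}\le\exp(-l_r e^{-x})$ along a geometric subsequence, Borel--Cantelli, then monotone interpolation. The bookkeeping with $\alpha>1$ and $c'<c$ is handled properly; in particular the summability of $n^{-1}(\log n)^{-\alpha}$ and of $\exp\bigl(-\kappa_d(\log n)^{1+c'}\bigr)$ is clear, and the $d\log 2$ loss from interpolation is indeed absorbed by $(\log\log r)^\delta$ in the upper bound and by $(c-c')\log\log\log r$ in the lower bound.

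Note, however, that the paper does not actually prove this lemma: it is quoted verbatim from \cite[Lemma~4.1]{HMS08} and restated without argument. So there is no proof in the paper to compare against; your write-up supplies exactly the kind of direct proof one would expect, and nothing beyond what you have written is needed.
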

\pagebreak[3]

Let $M_r^{\ssup i}$ denote the $i$-th biggest value taken by the potential 
in the ball of radius $r$ centered at the origin. 
The next lemma gives us estimates for upper order statistics for the potential. 


\begin{lemma}[Rough asymptotic behaviour for upper order statistics]\label{estimate}
Let $0<\b<1$ be a fixed constant. Then, almost surely, 
\begin{align*}
\lim_{n\to\infty}\frac{M_n^{\ssup{\lfloor n^{\b}\rfloor}}}{\log{n}}= d-\b.
\end{align*}
\end{lemma}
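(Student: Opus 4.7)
The plan is to convert the order-statistic question into two one-sided binomial tail estimates and apply Borel--Cantelli. For $x\ge 0$ and $n\in\N$, let
$$Y_n(x) := \sum_{z\in B_n} \one\{\xi(z)>x\},$$
which under the exponential law of the potentials is binomially distributed with $l_n\sim\kappa_d\, n^d$ trials and success probability $e^{-x}$. The basic equivalence is that $M_n^{\ssup{k}} > x$ if and only if $Y_n(x)\ge k$. Writing $k_n:=\lfloor n^{\b}\rfloor$, it is therefore enough to show that for every $\e>0$, almost surely,
$$Y_n\bigl((d-\b+\e)\log n\bigr) < k_n \quad\text{and}\quad Y_n\bigl((d-\b-\e)\log n\bigr) \ge k_n$$
for all sufficiently large $n$; the conclusion then follows by intersecting the corresponding full-measure events over a countable sequence $\e\downarrow 0$.

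For the upper threshold, $\mu_n^+:=\E\, Y_n\bigl((d-\b+\e)\log n\bigr) \sim \kappa_d\, n^{\b-\e}$, so that $k_n/\mu_n^+\to\infty$. The standard multiplicative Chernoff bound for binomial upper tails yields, for some $c=c(\e)>0$ and all sufficiently large $n$,
$$\P\bigl(Y_n\bigl((d-\b+\e)\log n\bigr)\ge k_n\bigr) \le \exp\bigl(-c\, n^{\b}\log n\bigr).$$
For the lower threshold, $\mu_n^-:=\E\, Y_n\bigl((d-\b-\e)\log n\bigr) \sim \kappa_d\, n^{\b+\e}$, so that $k_n/\mu_n^-\to 0$; the corresponding lower-tail Chernoff bound gives
$$\P\bigl(Y_n\bigl((d-\b-\e)\log n\bigr)< k_n\bigr) \le \exp\bigl(-c\, n^{\b+\e}\bigr).$$
Both bounds are summable in $n$, so two applications of the first Borel--Cantelli lemma yield the required two-sided control and complete the proof.

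The only step requiring care is the Chernoff bookkeeping, but no genuine obstacle arises: both tails decay faster than any polynomial in $n$, so one does not need to pass through a subsequence or to exploit any monotonicity in $n$, and in particular there is no need to couple the different values of $k_n$ or $n$. This is consistent with the heuristic that the $k$-th largest of $N$ i.i.d.\ standard exponentials is approximately $\log(N/k)$, which with $N\asymp n^d$ and $k\asymp n^{\b}$ gives exactly $(d-\b)\log n$.
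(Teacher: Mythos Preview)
Your proof is correct and follows essentially the same route as the paper: both arguments rewrite the event $\{M_n^{\ssup{\lfloor n^\b\rfloor}}\lessgtr (d-\b\pm\e)\log n\}$ as a binomial tail, bound it by a summable quantity of the form $\exp(-c\,n^{\b+\e})$ respectively $\exp(-c\,n^{\b})$, and finish with Borel--Cantelli. The only cosmetic difference is that you invoke the multiplicative Chernoff bound as a black box, whereas the paper performs the same estimate by hand using $\binom{l_n}{i}\le (e l_n/i)^i$; the resulting decay rates are identical.
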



\begin{proof}
Recalling that $l_n$ is the number of points in a ball of radius $n$ in $\Z^d$ we get
\begin{align}\label{distribution}
P\Big(M_n^{\ssup{\lfloor n^{\b} \rfloor }}\le x \Big)=\sum_{i=0}^{\lfloor n^\b\rfloor-1}\binom{l_n}{i}e^{-xi}\left(1-e^{-x}\right)^{l_n-i} .
\end{align}           
We fix $\e>0$  and infer that
\begin{align*}
P\Big(M_n^{\ssup{\lfloor n^{\b}\rfloor}} & \le \left(d-\b-\e\right) \log n \Big)
\le \sum_{i=0}^{\lfloor n^\b\rfloor}\left(l_nn^{-d+\b+\e}\right)^i\left(1-n^{-d+\b+\e}\right)^{l_n-n^\b}\\
&\le \left(n^\b+1\right)\left((\kappa_d+o(1))n^{\b+\e}\right)^{n^{\b}}\exp\left[-(\kappa_d
+o(1))n^{\b+\e}\right]\\
&=\exp\left[-n^{\b+\e}(\kappa_d+o(1))\right]. 
\end{align*}
Since this sequence is summable, we can use the Borel--Cantelli lemma to obtain the lower bound. 
Similarly, for the upper bound, we use \eqref{distribution} to get
\begin{align}\label{upb}
P\Big(M_n^{(\lfloor n^{\b}\rfloor)}\ge (d-\b+\e)\log n\Big)
&\le \sum_{i=\lfloor n^\b\rfloor}^{l_n}\binom{l_n}{i}n^{-(d-\b+\e)i}.
\end{align}
We now use a rough approximation for the binomial coefficient, namely  
$$\binom{l_n}{i}\le\frac{(l_n)^{i}}{i!}\le \left(\frac{e l_n}{i}\right)^{i},$$
when $i$ is big enough. Combining this with \eqref{upb} and using
that the first term in the ensuing sum is the largest, we obtain, for all sufficiently large~$n$,
\begin{align*}
&P\Big(M_n^{\ssup{\lfloor n^{\b}\rfloor}}\ge (d-\b+\e)\log n \Big)
\le \sum_{i=\lfloor n^\b\rfloor}^{l_n}\left(\frac{ e l_n}{i n^{d-\b+\e}}\right)^{i}
\le l_n \left(\frac{ e l_n}{ n^{d+\e}}\right)^{n^\b} 
\le e^{-n^{\b}}.
\end{align*}
Using the Borel--Cantelli lemma again we obtain an upper bound, 
completing the proof of our statement.
\end{proof}

\bigskip

\pagebreak[3]

Let $0<\s<\rho<\frac{1}{2}$ be some fixed constants. We define
\begin{center}
$k_n=\lfloor n^{\sigma}\rfloor $ \phantom{aaaaaa} and \phantom{aaaaaa} $m_n=\lfloor n^{\rho}\rfloor $
\end{center}
Combining Lemma~\ref{m_extreme} and Lemma~\ref{estimate}, we get the following result.

\begin{lemma} \label{tech}
For any constant $c>0$, for all sufficiently large $n$,  we have
\begin{itemize}
	\item[(i)]\ $  M_n^{\ssup 1}-M_n^{\ssup{k_n}}> (\s-c) \log n$;
	\item[(ii)]\ $ M_n^{\ssup {k_n}}-M_n^{\ssup {m_n}}> (\rho-\s-c)\log n$.
	\end{itemize}
\end{lemma}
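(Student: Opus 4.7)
The statement is essentially a direct consequence of combining the two previous lemmas, so the plan is primarily bookkeeping rather than serious analysis; the issue is to choose the error constants in the cited lemmas correctly so that the claimed gap of size $(\sigma-c)\log n$ and $(\rho-\sigma-c)\log n$ survives.

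For part (i), the plan is to bound $M_n^{\ssup 1}$ from below and $M_n^{\ssup{k_n}}$ from above. Fix $c>0$ and set $c_0=c/3$, say. By Lemma~\ref{m_extreme} applied to $M_n^{\ssup 1}=M_n$, almost surely for all sufficiently large~$n$,
\begin{equation*}
M_n^{\ssup 1}\ge d\log n-(1+c_0)\log\log\log n.
\end{equation*}
By Lemma~\ref{estimate} applied with $\beta=\sigma\in(0,1)$, we have $M_n^{\ssup{k_n}}/\log n\to d-\sigma$ almost surely, hence
\begin{equation*}
M_n^{\ssup{k_n}}\le (d-\sigma+c_0)\log n,\qquad\text{eventually.}
\end{equation*}
Subtracting the two bounds gives
\begin{equation*}
M_n^{\ssup 1}-M_n^{\ssup{k_n}}\ge (\sigma-c_0)\log n-(1+c_0)\log\log\log n,
\end{equation*}
and since $\log\log\log n=o(\log n)$ the right-hand side exceeds $(\sigma-c)\log n$ for all large~$n$.

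For part (ii), we apply Lemma~\ref{estimate} twice, first with $\beta=\sigma$ to get the lower bound
\begin{equation*}
M_n^{\ssup{k_n}}\ge (d-\sigma-c_0)\log n,
\end{equation*}
and then with $\beta=\rho\in(0,1)$ to get the upper bound
\begin{equation*}
M_n^{\ssup{m_n}}\le (d-\rho+c_0)\log n,
\end{equation*}
both holding almost surely for all sufficiently large~$n$. Subtracting gives
\begin{equation*}
M_n^{\ssup{k_n}}-M_n^{\ssup{m_n}}\ge (\rho-\sigma-2c_0)\log n,
\end{equation*}
which, for $c_0=c/2$, yields the desired inequality.

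The ``main obstacle'' is really only in the accounting: one must verify that the almost sure events from Lemma~\ref{m_extreme} and the two applications of Lemma~\ref{estimate} can be intersected (a countable intersection of full-measure events remains full measure) and that the $\log\log\log n$ error in Lemma~\ref{m_extreme} is absorbed into an arbitrarily small multiple of $\log n$. No other estimate or combinatorial argument is needed.
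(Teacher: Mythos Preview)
Your proof is correct and follows exactly the approach the paper indicates: the paper simply states that the lemma follows by ``combining Lemma~\ref{m_extreme} and Lemma~\ref{estimate}'' without giving further details, and your bookkeeping makes this precise. The only (harmless) slip is the shift from $c_0=c/3$ in part~(i) to $c_0=c/2$ in part~(ii); choosing $c_0=c/3$ throughout works for both parts and avoids the borderline non-strict inequality in~(ii).
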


Finally, we use Lemma~\ref{m_extreme} to give a lower bound for~$\Nl$.
\begin{lemma}[Eventual lower bound for $\Nlt$] \label{evlb} 
For any small $\e>0$, we have
\begin{align*}
\Nlt \ge d\log t -(d+1+\e)\log\log\log t,
\end{align*}
for all sufficiently large~$t$, almost surely.
\end{lemma}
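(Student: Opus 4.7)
The strategy is to produce a specific site realizing almost the full value $d\log t$, at the price of at most $(d+1+\varepsilon)\log\log\log t$. Observe that in $\underline{N}(t) = \max_z \{\xi(z) - \frac{|z|}{t}\log\xi(z)\}$, restricting the maximum to a ball $B_r$ yields
\[
\underline{N}(t) \ge M_r - \frac{r}{t}\log M_r,
\]
so we must choose the radius $r = r(t)$ so as to balance two competing losses: the loss $d\log(t/r)$ incurred by replacing $d\log t$ by $d\log r$ in the leading term of $M_r$, and the penalty $\frac{r}{t}\log M_r$. The second loss decreases with $r$, the first increases; so the plan is to pick $r$ making both of order $\log\log\log t$.

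The natural choice is $r := r(t) = t/\log\log t$. With this choice, $\log r = \log t - \log\log\log t$, hence $d\log r = d\log t - d\log\log\log t$, and also $\log\log r = \log\log t + o(1)$ and $\log\log\log r = \log\log\log t + o(1)$. Fix $c \in (0,\varepsilon)$ and apply the lower bound from Lemma~\ref{m_extreme}: almost surely, for all large enough $t$,
\[
M_r \ge d\log r - (1+c)\log\log\log r = d\log t - (d+1+c)\log\log\log t + o(\log\log\log t).
\]
For the penalty term, the upper bound in Lemma~\ref{m_extreme} (or its consequence $M_r \le c_2 \log r$) gives $\log M_r \le \log\log r + O(1) = \log\log t + O(1)$, whence
\[
\frac{r}{t}\log M_r \le \frac{\log\log t + O(1)}{\log\log t} = 1 + o(1).
\]
Combining the two estimates,
\[
\underline{N}(t) \ge d\log t - (d+1+c)\log\log\log t - 1 - o(\log\log\log t),
\]
and since $c<\varepsilon$ the additive $-1 - o(\log\log\log t)$ is absorbed into $-(\varepsilon-c)\log\log\log t$ for $t$ large, giving the claim.

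There is no real obstacle beyond identifying the correct scale $r = t/\log\log t$; the remainder of the argument is a routine asymptotic calculation using Lemma~\ref{m_extreme}. It is worth noting that this choice is essentially forced: taking $r = t/f(t)$ produces a main loss of $d\log f(t)$ and a penalty of order $\log\log t / f(t)$, and asking both to be $O(\log\log\log t)$ determines $f(t) = \log\log t$ up to lower order factors, which also explains why the constant $d+1$ (and not $d$) appears in the statement.
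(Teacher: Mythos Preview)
Your proof is correct and follows essentially the same approach as the paper: both bound $\underline{N}(t)\ge M_r-\frac{r}{t}\log M_r$, apply the two sides of Lemma~\ref{m_extreme}, and evaluate at a radius of order $t/\log\log t$. The only cosmetic difference is that the paper locates this radius by optimizing the resulting function $f_t(r)$ via calculus, whereas you simply plug in the explicit choice $r=t/\log\log t$ and verify it works, which is slightly more direct.
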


\begin{proof}
Using Lemma \ref{m_extreme} we get, for any fixed $c>0$ and $c_2>d$,
\begin{align*}
\Nlt \ge \max_{r>0}\left[d\log r-(1+c)\log\log\log r -\frac{r}{t}\log\log r -\frac{r}{t}\log c_2 \right],
\end{align*}
if the maximum of the expression in the square brackets (which we denote by $f_t(r)$) is attained at
a point $r_t$, large enough so that Lemma \ref{m_extreme} holds. 

The solution $r=r_t$ of $f_t'(r)=0$ satisfies
\begin{align*}
\frac{d}{r}=\frac{\log\log r}{t}\, \big(1+{o}(1)\big).
\end{align*}
Writing $r_t=t\phi(r_t)$, where $\phi(r)=d(\log\log r)^{-1}(1+{o}(1))$ we get that 
\begin{align}\label{B}
\log \phi(r)= -\log\log\log r +\log d + o(1)
\end{align}
and hence $\log r_t = \log t + \log \phi(r_t)= \log t + o(\log r_t)$, which implies
$\log r_t/ \log t =1+ o(1)$. Note that this implies $r_t\rightarrow \infty$ as $t\rightarrow \infty$, which  
justifies \textit{a posteriori} the application of Lemma~\ref{m_extreme}. Combining this with \eqref{B} we get,
\begin{align*}
f(r_t)&=d(\log(t\phi(r_t)))-(1+c)\log\log\log r_t -\phi(r_t)(\log\log r_t+\log c_2 )\\
&=d\log t - (1+d+c)\log\log\log t +O(1).
\end{align*}
\end{proof}


\subsection{Upper bounds}\label{se.upper}
\smallskip

We start by showing ad-hoc bounds for the growth rates of the contribution
of certain families of paths. These can be compared to the lower bound for the growth rate
of $U(t)$ showing that the paths can be be neglected.
%
For a path $(X_s \colon s\geq 0)$ on the lattice $\Z^d$ we denote by $J_t$ the number number of
jumps up to time~$t$. 
Recall that $M_n^{\ssup k}$ denotes the $k^{\rm th}$ largest potential value on
the sites $z\in\Z^d$ with $|z|\leq n$.
\smallskip

\begin{lemma}\label{nohits}
Fix $0<\sigma<\frac12$ and $k_n=n^\sigma$. Let
$$U_2(t)=\E\Big[\exp\Big\{\int_0^t\xi(X_s)\dd s\Big\}
\one\Big\{\sfrac{t}{(\log t)^2} \le J_t\le t \log t,
\max_{0\le s\le t}\xi(X_s)\le M_{J_t}^{\ssup{k_{J_t}}}\Big\}\Big].$$
Then
$$\lim_{t\uparrow \infty} \frac1t\, \log \frac{U_2(t)}{U(t)} = -\infty.$$
\end{lemma}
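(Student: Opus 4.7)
The plan is to bound $U_2(t)$ from above by exploiting the pointwise potential constraint encoded in the indicator, and then to compare with the almost sure lower bound on $U(t)$ furnished by Lemma~\ref{lowbound} and Lemma~\ref{evlb}. Since on the event in question we have $\xi(X_s)\le M_{J_t}^{\ssup{k_{J_t}}}$ for every $s\in[0,t]$, splitting according to the number of jumps $J_t=j$ gives
\begin{equation*}
U_2(t)\le \sum_{j=\lceil t/(\log t)^2\rceil}^{\lfloor t\log t\rfloor}e^{tM_j^{\ssup{k_j}}}\,\P(J_t=j)\le t\log t\cdot\exp\!\Big(t\max_{t/(\log t)^2\le j\le t\log t}M_j^{\ssup{k_j}}\Big).
\end{equation*}

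Next I would invoke Lemma~\ref{estimate} with $\beta=\sigma$ to conclude that, almost surely, $M_n^{\ssup{k_n}}\le (d-\sigma/2)\log n$ for all $n\ge n_0(\omega)$. Since the lower endpoint $t/(\log t)^2$ of the summation range tends to infinity with $t$, this bound applies uniformly on the range once $t$ is large enough, and combined with $\log j\le\log t+\log\log t$ it yields
\begin{equation*}
\max_{t/(\log t)^2\le j\le t\log t}M_j^{\ssup{k_j}}\le\big(d-\tfrac{\sigma}{2}\big)\log t+O(\log\log t),\qquad\text{almost surely as }t\to\infty.
\end{equation*}
On the other hand, Lemma~\ref{lowbound} combined with Lemma~\ref{evlb} yields, almost surely for large $t$,
\begin{equation*}
\log U(t)\ge tL_t\ge t\Nl-2dt+o(t)\ge td\log t-2t(d+1)\log\log\log t.
\end{equation*}

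Subtracting the two estimates and dividing by $t$ then produces
\begin{equation*}
\frac{1}{t}\log\frac{U_2(t)}{U(t)}\le -\frac{\sigma}{2}\log t+O(\log\log t)\longrightarrow -\infty,
\end{equation*}
which is the required statement. The one delicate point in this argument is the uniform application of Lemma~\ref{estimate} across the range $j\in[t/(\log t)^2,t\log t]$; this is straightforward, however, because the convergence $M_n^{\ssup{k_n}}/\log n\to d-\sigma$ is an almost sure statement in the single index~$n$, and the infimum of the range tends to infinity with~$t$, so the bound at level $n=j$ holds simultaneously for all admissible $j$ once $t$ is sufficiently large.
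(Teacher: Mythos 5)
Your argument is correct and follows essentially the same route as the paper: bound the integrand by $M_{J_t}^{\ssup{k_{J_t}}}$ on the event, sum over the number of jumps, control $M_n^{\ssup{k_n}}$ via Lemma~\ref{estimate}, and compare with the lower bound on $L_t$ from Lemmas~\ref{lowbound} and~\ref{evlb}. The only cosmetic difference is that you keep an explicit polynomial prefactor $t\log t$ where the paper simply bounds the sum by its maximum; this changes nothing after taking $\frac1t\log$.
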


\begin{proof}
Simply replacing $\xi(X_s)$ in the integral by the maximum, we get
 \begin{align*}
U_2(t)&=\sum_{t/(\log t)^{2}\le n\le t\log t} \E\Big[\exp\Big\{\int_0^t\xi(X_s)\dd s\Big\}
\one\big\{J_t=n, \max_{0\le s\le t}\xi(X_s)\le M_{n}^{\ssup{k_n}}\big\}\Big]\\
&\le \sum_{t/(\log t)^{2} \le n\le t\log t}e^{tM_n^{(k_n)}}\, \P(J_t=n)
\le \max_{t/(\log t)^{2} \le n\le t\log t} e^{tM_n^{(k_n)}}.
\end{align*}
By Lemma~\ref{estimate} we have $M_n^{(k_n)}=(d-\s)\log n+o(\log n)$ and hence
$$\frac{1}{t}\log U_2(t)\le (d-\s)\log t + o(\log t),$$
so that the result follows by comparison with Lemma~\ref{lowbound} and Lemma~\ref{evlb}.
\end{proof}

\begin{lemma}\label{jumps}
Let
$$U_3(t)=\E\Big[\exp\Big\{\int_0^t\xi(X_s)\dd s\Big\}
\Big(\one\big\{J_t < \sfrac{t}{(\log t)^2} \big\}+\one\big\{J_t>t \log t \big\}\Big)\Big].$$
Then
$$\lim_{t\uparrow \infty} \frac1t\, \log \frac{U_3(t)}{U(t)} = -\infty.$$
\end{lemma}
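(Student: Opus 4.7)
My plan is to adapt the template from the proof of Lemma~\ref{nohits}: for any path making exactly $n$ jumps, the trajectory lies in the ball $B_n$, so $\int_0^t\xi(X_s)\dd s\le tM_n$. The integrand then no longer depends on the walk, and I can use the independence of $J_t\sim\mathrm{Poisson}(2dt)$ from the potential. I split $U_3(t)=U_3^{(1)}(t)+U_3^{(2)}(t)$ according to the two indicators in the definition and compare each piece with the almost sure lower bound
$$L_t\;\ge\;d\log t-(d+1+\e)\log\log\log t-2d+o(1)$$
obtained by combining Lemmas~\ref{lowbound} and~\ref{evlb}.

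For the few-jumps part, monotonicity of $r\mapsto M_r$ gives
$$U_3^{(1)}(t)\;\le\;\sum_{n\le N_t}e^{tM_n}\,\P(J_t=n)\;\le\;e^{tM_{N_t}}\qquad \text{with } N_t:=\lfloor t/(\log t)^2\rfloor,$$
and the upper estimate on $M_{N_t}$ in Lemma~\ref{m_extreme}, after expanding $\log N_t=\log t-2\log\log t$ and absorbing the lower-order terms $\log\log N_t+(\log\log N_t)^\delta$, yields $M_{N_t}\le d\log t-(2d-1)\log\log t+o(\log\log t)$ almost surely. Subtracting $L_t$ leaves a leading negative contribution of order $-(2d-1)t\log\log t$, so $\tfrac{1}{t}\log(U_3^{(1)}(t)/U(t))\to-\infty$ (noting that $2d-1\ge 1$ whenever $d\ge 1$).

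For the many-jumps part I estimate $U_3^{(2)}(t)\le\sum_{n>t\log t}e^{tM_n}\,\P(J_t=n)$, use the Stirling bound $\P(J_t=n)\le e^{-2dt}(2dte/n)^n$, and apply the crude eventual estimate $M_n\le(d+1)\log n$ from Lemma~\ref{m_extreme}. Since $n>t\log t$ forces $\log(n/(2dt))\ge\log\log t-O(1)$, the Poisson factor contributes a weight of order $\exp\{-n\log\log t\,(1-o(1))\}$, which already at $n=t\log t$ dominates the reward $e^{tM_n}$ by a factor $\exp\{-\tfrac12 t\log t\log\log t\}$. The summands decay geometrically in $n$, so the whole series is controlled by a constant times its first term, and the resulting bound is orders of magnitude below $U(t)$.

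\textbf{Main obstacle.} The many-jumps estimate has very comfortable slack and is essentially routine. The delicate step is the few-jumps estimate: the entire gain amounts only to the $(2d-1)\log\log t$ correction on top of the leading $d\log t$ of $M_n$, which in dimension one reduces to a single $\log\log t$. I therefore have to track the sub-leading terms $\log\log N_t$ and $(\log\log N_t)^\delta$ from Lemma~\ref{m_extreme} with care to ensure this gain survives the comparison with $L_t$.
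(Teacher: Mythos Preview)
Your proposal is correct and follows essentially the same route as the paper: bound $\int_0^t\xi(X_s)\,ds\le tM_n$ on $\{J_t=n\}$, use the Poisson law of $J_t$, and treat the two jump regimes separately via Lemma~\ref{m_extreme} and Stirling. Your few-jumps bound is in fact slightly cleaner than the paper's --- you discard the Poisson weight entirely via $\sum_n\P(J_t=n)\le1$, whereas the paper retains the Stirling correction $-\tfrac{n}{t}\log\tfrac{n}{2det}$ and then argues it is $o(1)$ at the boundary; your many-jumps argument (geometric decay of the Stirling-bounded summands) is equivalent to the paper's observation that each summand is eventually $\le n^{-2}$.
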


\begin{proof}
We first show that almost surely, 
\begin{equation}\label{U3bound}
\frac{1}{t}\log U_3(t)\le \max_{n<t/(\log t)^{2} }\Big\{M_n-\frac{n}{t}\log\frac{n}{2det}\Big\}-2d+o(1).
\end{equation}
Indeed, we have
\begin{align}
U_3(t)&\le \sum_{\heap{\{n<t /(\log t)^2\}}{\cup\{n>t\log t\}}} e^{tM_n}\P(J_t=n)
= \sum_{\heap{\{n<t /(\log t)^2\}}{\cup\{n>t\log t\}}}  e^{tM_n}\frac{(2dt)^{n}e^{-2dt}}{n!}\notag\\
&\le \sum_{\heap{\{n<t /(\log t)^2\}}{\cup\{n>t\log t\}}}\exp \big(tM_n-2dt+n\log2dt-\log n! \big). \label{sum2}
\end{align}
To estimate $n!$ we use Stirling's formula, 
\begin{align*}
n!=\sqrt{2\pi n}\left(\frac{n}{e}\right)^ne^{\delta(n)}, \qquad\mbox{ with }\lim_{n\uparrow\infty}\delta(n)=0.
\end{align*}
Fixing some $\e>0$ we know from Lemma \ref{m_extreme}, that $M_n\le \left(d+\e\right) \log n$ for all sufficiently large~$n$, 
so for $t$ large enough, we obtain for all $n>t \log t$, 
\begin{align*}
tM_n-2dt+n\log 2dt-\log n! &\le t (d+\e)\log n -n\log \sfrac{n}{2edt} -\d (n)\\
&\le t (d+\e)\log n \Big(1-\sfrac{1+o(1)}{(d+\e)}\log \left(\sfrac{\log t}{2ed}\right)+o(1)\Big) \\
&\le -2\log n,
\end{align*}
by noticing that   $n\mapsto\frac{n}{t\log n}\log \frac{n}{2edt}$  is decreasing on  $(t \log t,\infty)$.
Hence, almost surely, $$\sum_{n>t\log t} \exp \left(tM_n-2dt+n\log2dt-\log n! \right)=o(1),$$ 
so that using \eqref{sum2} the following upper bound for $U_3$
\begin{align*}
U_3(t)&\leq \frac{t}{(\log t)^2} \max_ {n<t/(\log t)^2}\exp \left(tM_n-2dt + n\log 2dt - \log n! \right)+o(1)\\
      &\leq \frac{t}{(\log t)^2} \max_ {n<t/(\log t)^2}\exp \left(tM_n-2dt - n\log\sfrac{n}{2edt}+o(t)\right)+o(1)
\end{align*}
and hence \eqref{U3bound} follows. As a second step we show that
\begin{equation}\label{detbound}
\frac{1}{t}\log U_3(t)\le d \log t - (2d-1)\log\log t + o(\log\log t).
\end{equation}
Recall that $r \mapsto M_r$ is a non-decreasing function and check that
\begin{align*}
r\longmapsto \frac{r}{t}\log\frac{r}{2det} \text{ is decreasing on } (0,2det),
\end{align*}
hence, replacing $r$ in the bracket by $t/(\log t)^{2}$
\begin{align*}
\max_{r<t/(\log t)^{2} }\left[M_r-\frac{r}{t}\log\frac{r}{2det}\right]=M_{t/(\log t)^{2}}+o(1).
\end{align*}
By Lemma \ref{m_extreme} we have $M_r \le d\log r + \log\log r + o(\log\log r)$ for all 
sufficiently large~$r$, we get, for $t$ large enough
\begin{align}\label{113}
\max_{r<t/(\log t)^{2} }\left[M_r-\frac{r}{t}\log\frac{r}{2det}\right]\le d\log t -(2d-1)\log \log t +o(\log\log t),
\end{align}
and combining \eqref{U3bound} and \eqref{113}, we have proved~\eqref{detbound}. 
Using Lemma~\ref{lowbound} and Lemma \ref{evlb},
\begin{align*}
\frac{1}{t}\log\frac{U_3(t)}{U(t)} &\le \frac{1}{t}\log U_3(t) - \Nlt -2d + o(1)\\ 
&\le -\left(2d-1\right)\log\log t + o(\log\log t)\rightarrow -\infty,
\end{align*}
and hence our statement is proved.
\end{proof}

The following versatile upper bound is the main tool in the proof of all our theorems and
will be used repeatedly. Note for example that, together with Lemmas~\ref{nohits} and \ref{jumps}
it implies  Lemma~\ref{upbound} if the parameters in~$(ii)$ are chosen as $k=1$ and $\delta=0$.

\begin{proposition}\label{fineupper}
For a path $(X_s \colon s\geq 0)$ on the lattice $\Z^d$ we denote by $J_t$ the number of
jumps up to time~$t$. We denote by $M_n^{\ssup k}$ the $k^{\rm th}$ largest potential value on
the sites $z\in\Z^d$ with $|z|\leq n$, and let $Z^{\ssup k}_n$ be the site where this maximum is attained.
Further fix $0<\sigma<\frac12$ and let $k_n=\lfloor n^\s \rfloor$ and $a_t\downarrow 0$.
\begin{itemize}
\item[(a)]\ 
For $n\in\N$ let
$$\begin{aligned}
U^{\ssup{n}}_1(t)= 
\E\Big[ \exp\Big\{ \int_0^t \xi(X_s) \, ds \Big\} &\one\{ J_t=n\}\,
\one\big\{\max_{0\le s\le t}\xi(X_s) > M_{n}^{\ssup{k_{n}}}\big\} \Big].
\end{aligned}$$
Then, for all $\eps>0$ there exists $C_\eps>0$ such that uniformly for all $t a_t \le n\le t \log t$,
$$\frac 1t \log  U^{\ssup n}_1(t)
\leq M_n^{\ssup 1} - \frac{n}{2t}\,\big( \log\log n - C_\eps\big) + \eps - 2d + o(1)\quad
\mbox{ as $t\uparrow\infty$.}$$
\item[(b)]\ 
For fixed $\delta\geq 0$ and $k,n\in\N$ let
$$\begin{aligned}
U^{\ssup {\delta,k,n}}_1(t)= 
\E\Big[ \exp\Big\{ & \int_0^t \xi(X_s) \, ds \Big\} \\
& \one\{ J_t=n\}\, \one\Big\{ \sup_{0\leq s \leq t} \xi(X_s) \not\in \{M_{n}^{\ssup 1},\ldots, M_{n}^{\ssup{k-1}}\} \Big\}\\
& \one\big\{ Z^{\ssup k}_{n} \in \{X_s \colon 0\leq s \leq t\}, |X_t-Z^{\ssup k}_{n}| \geq \delta r_t\big\} \Big].
\end{aligned}$$
Then,
almost surely, 
$$\mbox{uniformly in $k\le k_n$ and $\frac{t}{(\log t)^2} \leq n \leq t a_t$,}$$
we have that
$$\frac 1t \log  U^{\ssup {\delta,k,n}}_1(t)
\leq  M_n^{\ssup k} - \frac{|Z^{\ssup k}_n|}{t} \,\log\log |Z^{\ssup k}_n|   -2d 
- \delta
+o(1)\quad
\mbox{ as $t\uparrow\infty$.}$$
\end{itemize}
\end{proposition}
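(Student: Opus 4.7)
The strategy is to combine the trivial Feynman--Kac estimate $\int_0^t\xi(X_s)\,ds\le t\max_{0\le s\le t}\xi(X_s)$ with a quantitative estimate of the probability that the continuous-time simple random walk makes $J_t=n$ jumps and visits a prescribed collection of lattice sites. On the event defining $U_1^{\ssup n}(t)$ we only have $\max_s\xi(X_s)\le M_n^{\ssup 1}$, while on the event defining $U_1^{\ssup{\delta,k,n}}(t)$ the two indicators force $\max_s\xi(X_s)=M_n^{\ssup k}$, since the walk stays in $B_n$ because $J_t=n$, the top $k-1$ sites are excluded by the first indicator, and $Z_n^{\ssup k}$ is visited by the second. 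Hence
\[
U_1^{\ssup n}(t)\le e^{tM_n^{\ssup 1}}\,\P(\mathcal A_a),\qquad U_1^{\ssup{\delta,k,n}}(t)\le e^{tM_n^{\ssup k}}\,\P(\mathcal A_b),
\]
where $\mathcal A_a=\{J_t=n,\;X\text{ visits one of }Z_n^{\ssup 1},\ldots,Z_n^{\ssup{k_n-1}}\}$ and $\mathcal A_b=\{J_t=n,\;X\text{ visits }Z_n^{\ssup k},\;|X_t-Z_n^{\ssup k}|\ge\delta r_t\}$, and it remains to estimate the two probabilities.

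Combining $\P(J_t=n)=e^{-2dt}(2dt)^n/n!$ with Stirling yields $\tfrac1t\log\P(J_t=n)\le -2d+\tfrac{n}{t}\log(2edt/n)+o(1)$, analogously to the proof of Lemma~\ref{jumps}. Conditional on $J_t=n$ the skeleton is a uniform discrete $n$-step simple random walk on $\Z^d$ started at the origin, and I would bound the probability that this walk visits a prescribed site $z\in B_n$ by a first-passage decomposition: apply the strong Markov property at the first skeleton time hitting $z$, use the combinatorial weight of a first-passage path of length $\ge |z|$ from $0$ to $z$, and optimise the trade-off with the Stirling factor. The outcome is the travel cost $|z|\log\log|z|/t$ matching the form of Lemma~\ref{lowbound}.

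To finish part~$(a)$ I would union-bound over the $k_n-1\le n^{\sigma}$ candidate target sites; the resulting $\log k_n=o(t)$ term is absorbed into the exponent, and the worst-case value $|z|\le n$ yields the uniform cost $-\tfrac{n}{2t}(\log\log n - C_\eps)$, with the factor $\tfrac12$ providing the safety slack needed to keep the bound valid uniformly over $ta_t\le n\le t\log t$. For part~$(b)$ I would apply the strong Markov property a second time at the \emph{last} visit of $z=Z_n^{\ssup k}$ (via time-reversal of the skeleton): the remaining excursion must cover distance $\ge\delta r_t$, and the same path-counting argument attributes to it an additional cost $(\delta r_t/t)\log\log(\delta r_t)$, which converges to exactly $\delta$ thanks to the definition $r_t=t/\log\log t$. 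Uniformity over $k\le k_n$ is then routine via Lemma~\ref{tech}, which controls the gaps between successive order statistics of the potential.

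The main obstacle is the combinatorial/large-deviations step extracting the form $|z|\log\log|z|/t$ of the travel cost uniformly in $n$ across the prescribed range and uniformly in the position of $z$ inside $B_n$. The appearance of $\log\log|z|$ rather than $\log|z|$ reflects the fact that the typical potential at the targeted sites is of order $\log|z|$ (cf.~Lemma~\ref{m_extreme}), so the cost--reward trade-off governing the hitting probability operates precisely at this double-logarithmic scale; this is also why the critical choice $r_t=t/\log\log t$ in Theorem~\ref{main} produces the $O(1)$-sized additive constant $\delta$ in part~$(b)$ rather than a vanishing or diverging correction.
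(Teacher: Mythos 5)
There is a genuine gap, and it is at the very first step. Your decoupling
$U^{\ssup{\delta,k,n}}_1(t)\le e^{tM_n^{\ssup k}}\,\P(\mathcal A_b)$ replaces $\int_0^t\xi(X_s)\,ds$ by $t\max_s\xi(X_s)$ and then tries to recover the travel cost $\frac{|z|}{t}\log\log|z|$ purely from the probability of the random-walk event. But $\P(\mathcal A_b)$ is a combinatorial quantity that knows nothing about the potential, and no hitting estimate for the skeleton walk can produce a $\log\log$ factor: the per-step cost of forcing an $n$-step simple random walk to reach a site at $\ell^1$-distance $|z|\le n$ is bounded by an absolute constant (the number of monotone paths is a multinomial coefficient, so the probability is at least $e^{-cn}$ with $c$ bounded), whereas the stated bound requires a cost of order $\frac{|z|}{t}\log\log|z|$, which in the relevant range $n\le ta_t$ with $a_t\log\log t\to\infty$ is much larger than any $O(n/t)$ term. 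The same objection kills your treatment of the $-\delta$ term: the combinatorial cost of an excursion of length $\delta r_t$ is $O(\delta r_t/t)=O(\delta/\log\log t)\to 0$, not $\delta$. You correctly sense that the $\log\log$ "reflects the typical potential at the targeted sites", but in your decomposition the potential has already been discarded, so this intuition cannot be implemented.

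The paper's proof keeps the potential and the path coupled: it conditions on the discrete skeleton $y$ and integrates out the exponential waiting times using Lemma~\ref{uppb}, which bounds the contribution of a single path by $e^{t\eta_{\max}}\prod_{i\ne i(y)}(\eta_{\max}-\eta_i)^{-1}$. The travel cost then comes from counting how many of the $n$ visited sites lie outside the set $G_n$ of the $m_n=\lfloor n^\rho\rfloor$ highest potential values: each such site contributes a factor at most $\big(M_n^{\ssup{k_n}}-M_n^{\ssup{m_n}}\big)^{-1}\le\big(\frac{\rho-\sigma}{2}\log n\big)^{-1}$ by Lemma~\ref{tech}, and there are at least $|Z_n^{\ssup k}|+\lfloor\delta r_t\rfloor-m_n$ of them in part (b) (resp.\ $\lfloor n/2\rfloor$ in part (a), via the total disconnectedness of $G_n$). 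Taking logarithms, $(\log n)^{-|z|-\delta r_t+m_n}$ is exactly what yields $-\frac{|z|}{t}\log\log|z|-\delta$. In other words, the $\log\log$ is the logarithm of the gap between high and low order statistics of the potential, extracted from the waiting-time integral — not from path counting. To repair your argument you would have to abandon the bound $\int\xi\le t\max\xi$ and quantify the time the path is forced to spend at low-potential sites, which is precisely what Lemma~\ref{uppb} does.
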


The first step in the proof is to integrate out the waiting times of the continuous time
random walk paths. The following fact taken from~\cite{HMS08} helps with this.  
\smallskip

\begin{lemma}\label{uppb}
Let $\eta_0,\dots,\eta_n$ be fixed real numbers attaining their maximum only once,
i.e.\ there is an index $0\le k \le n$ with $\eta_k> \eta_i$  for all $i\neq k$.
Then, for all $t>0$,
\begin{align*}
\int_{\R_+^n}\exp\Big\{\sum_{i=0}^{n-1}t_i\eta_i+\Big(t-\sum_{i=0}^{n-1}t_i\Big)\eta_n\Big\}\one
\Big\{\sum_{i=0}^{n-1} t_i< t \Big\}\, dt_0\dots dt_{n-1}\le e^{t\eta_k}\prod_{i\neq k}\frac{1}{\eta_k-\eta_i}.
\end{align*}
\end{lemma}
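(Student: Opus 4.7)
The plan is to factor the dominant exponential $e^{t\eta_k}$ out of the integrand and then exploit the fact that the remaining factor is constant in $t_k$. Introduce $a_i := \eta_k - \eta_i$ for $0 \le i \le n$; by the unique-maximum hypothesis $a_k = 0$ and $a_i > 0$ for every $i \neq k$. Setting $t_n := t - \sum_{i=0}^{n-1} t_i$, which is strictly positive on the domain of integration, and using $\sum_{i=0}^{n} t_i = t$, the exponent may be rewritten as
$$\sum_{i=0}^{n-1} t_i \eta_i + t_n \eta_n = t\eta_k - \sum_{i=0}^{n} t_i \, a_i.$$
Hence the left-hand side of the asserted inequality equals
$$e^{t\eta_k} \int_{\Sigma} \exp\Big\{-\sum_{i=0}^{n} t_i \, a_i\Big\}\, dt_0\cdots dt_{n-1},$$
where $\Sigma = \{(t_0,\dots,t_{n-1}) \in \R_+^n \colon \sum_{i < n} t_i < t\}$.

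The decisive observation is that the integrand is constant in $t_k$ because $a_k = 0$. The next step is an affine change of variables so that the $n$ free coordinates become $(t_i)_{i \neq k,\, 0 \le i \le n}$, with $t_k$ playing the role of the slack variable defined by $t_k = t - \sum_{i \neq k} t_i$. This map is a composition of a coordinate permutation and an affine shift, so its Jacobian has absolute value one. In the new coordinates the integral reads
$$\int_{\{(t_i)_{i \neq k}\in \R_+^n \colon \sum_{i \neq k} t_i \le t\}} \exp\Big\{-\sum_{i \neq k} t_i\, a_i\Big\}\prod_{i \neq k} dt_i.$$

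Finally, since every $a_i$ with $i \neq k$ is strictly positive and the integrand is nonnegative, enlarging the simplex domain to the full positive orthant $\R_+^n$ only increases the value, yielding a product of one-dimensional exponential integrals:
$$\prod_{i \neq k} \int_0^\infty e^{-t_i a_i}\, dt_i = \prod_{i \neq k}\frac{1}{\eta_k - \eta_i}.$$
Multiplying by $e^{t\eta_k}$ gives the stated bound. There is no real obstacle: the only bookkeeping worth flagging is that the change of variables is uniform in the two cases $k < n$ (where $t_k$ is among the original integration variables and must be swapped for $t_n$) and $k = n$ (where $t_k = t_n$ is already the slack variable, so no change of variables is needed).
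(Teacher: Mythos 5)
Your proof is correct and follows essentially the same route as the paper's: factor out $e^{t\eta_k}$, use the Jacobian-one affine change of variables that swaps $t_k$ with the slack variable $t-\sum_{i<n}t_i$ (the paper phrases this as invariance of the integral under permutations of the $\eta_i$, reducing to the case $k=n$), and then enlarge the simplex to the full orthant to get the product of convergent one-dimensional exponential integrals. No gaps.
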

\bigskip
  
\pagebreak[3]
  
\begin{proof}
First, we prove the result for the case $k=n$, i.e.\ $\eta_n>\eta_i$ for all $i<n$. We have
\begin{align}
\int_{\R^n}\exp&\Big\{\sum_{i=0}^{n-1}t_i\eta_i+\Big(t-\sum_{i=0}^{n-1}t_i\Big)\eta_n\Big\}
\one\Big\{\sum_{i=0}^{n-1} t_i< t, t_i\ge 0 \forall i\le n-1  \Big\}\, {d}t_0\dots {d}t_{n-1}\notag\\
  &\quad=e^{t\eta_n}\int_{\R_+^n}\exp\Big\{\sum_{i=0}^{n-1}t_i(\eta_i-\eta_n)\Big\}\one
  \Big\{\sum_{i=0}^{n-1} t_i< t \Big\}\, {d}t_0\dots {d}t_{n-1}\notag\\
&\quad\le e^{t\eta_n}\int_{\R_+^n}\exp\Big\{\sum_{i=0}^{n-1}t_i(\eta_i-\eta_n)\Big\}\, {d}t_0\dots{d}t_{n-1}
= e^{t\eta_n}\prod_{i< n}\frac{1}{\eta_n-\eta_i}\notag.
  \end{align}
Now we show that any permutation of the indices does not change the value of the integral 
above and this will be sufficient to prove the statement.
First, it is obvious that transposition of $i$ and $j$ does not change the integral 
if $i , j\le n-1 $. Now we consider the case of a transposition $\tau$ on $j$ and $n$, 
where $j<n$. We change variables such that $t'_i=t_i$ if $i\neq j, i\le n-1$ and 
$t'_j=t-\sum_{i=0}^{n-1}t_i$, and get
\begin{align*}
&\int_{\R_{+}^n}\exp\Big\{\sum_{i=0}^{n-1}t_i\eta_i+\Big(t-\sum_{i=0}^{n-1}t_i\Big)\eta_n\Big\}\one
\Big\{\sum_{i=0}^{n-1} t_i< t \Big\}\, {d}t_0\dots {d}t_{n-1}\\
&=\int_{\R^n}\exp\Big\{\sum_{i=0}^{n-1}t'_i\eta_{\tau(i)}-\Big(t-\sum_{i=0}^{n-1}t'_i\Big)\eta_{\tau(n)}\Big\}\one\\
& \qquad \qquad \times   \Big\{\sum_{i=0}^{n-1} t'_i< t,  t'_i\ge 0 \forall i\le n-1\Big\}\, {d}t'_0\dots\, {d}t'_{n-1},
 \end{align*}  
 which completes the proof.
  \end{proof}

For the proof of Proposition~\ref{fineupper}\,(b) denote by
\begin{align*}
{\mathcal P}^{\ssup {\delta,k,n}} &  = 
\Big\{  y=(y_0,\dots,y_n) \colon y_0=0,\,|y_{i-1}-y_i|=1,\\
& \quad\{ y_0,\dots,y_n\} \cap \{Z_{n}^{\ssup 1},\ldots, Z_{n}^{\ssup{k-1}}\} = \emptyset,
Z^{\ssup k}_{n} \in \{ y_0,\dots,y_n\},  |y_n-Z^{\ssup k}_{n}| \geq \delta r_t\Big\}
\end{align*}
the set of all `good' paths and let $(\tau_i)$ be a sequence of independent, exponentially 
distributed random variables with parameter $2d$. 
\pagebreak[3]

Denote by ${\sf E}$ the expectation with 
respect to $(\tau_i)$. We have
\begin{equation}\begin{aligned}
U^{\ssup {\delta,k,n}}_1(t)=
\sum_{y\in {\mathcal P}^{\ssup {\delta,k,n}}} (2d)^{-n} {\sf E} \Big[\exp\Big\{ & \sum_{i=0}^{n-1}\tau_i\xi(y_{i})+
 \Big(t-\sum_{i=0}^{n-1}\tau_i\Big)\xi(y_n)\Big\} \\
& \times \one\Big\{\sum_{i=0}^{n-1}\tau_i<t,\sum_{i=0}^{n}\tau_i>t\Big\}\Big].
\label{foru1}
\end{aligned}
\end{equation}

In the further proof we apply
Lemma~\ref{uppb} to the values of the potential $\xi$ along
a path $y$. However, to do so we need the maximum of $\xi$ along the path $y$ to be attained only once. Therefore 
we have to modify the potential along the path slightly.

\pagebreak[3]

We fix $y\in {\mathcal P}^{\ssup {\delta,k,n}}$ and let
$$i(y)=\min\big\{i\in\{0,\ldots,n\} \colon  y_i = Z^{\ssup k}_n\big\}$$
be the index of the first instant where the maximum of the potential over
the path is attained. 
Now we define a slight variation of $\xi$ on $y$ in the following way.
Fix $\eps>0$ and define $\xi^{y}\colon\{0,\dots,n\}\to \R$ by $\xi^y_i=\xi(y_i)$ if $i\ne i(y)$, and $\xi^y_{i(y)}=\xi(y_{i(y)})+\e$.
We obtain, using $\xi(y_i)\le \xi^y_i$, that
\begin{align}
{\sf E} &\Big[\exp\Big\{\sum_{i=0}^{n-1}\tau_i\xi(y_{i})+
\Big(t-\sum_{i=0}^{n-1}\tau_i\Big)\xi(y_n)\Big\}\one\Big\{
\sum_{i=0}^{n-1}\tau_i<t,\sum_{i=0}^{n}\tau_i>t\Big\}\Big]\notag\\
&\le {\sf E} \Big[\exp\Big\{\sum_{i=0}^{n-1}\tau_i\xi^y_i+
\Big(t-\sum_{i=0}^{n-1}\tau_i\Big)\xi^y_n\Big\}\one\Big\{
\sum_{i=0}^{n-1}\tau_i<t,\sum_{i=0}^{n}\tau_i>t\Big\}\Big]\notag\\
&=(2d)^{n+1}\int_{\R_+^d}\exp\Big\{\sum_{i=0}^{n-1}t_i\xi^y_i+\Big(t-\sum_{i=0}^{n-1}t_i\Big)\xi^y_n\Big\}\notag\\
& \qquad \qquad \qquad \qquad \one\Big\{\sum_{i=0}^{n-1}t_i<t,\sum_{i=0}^{n}t_i>t\Big\}\, e^{-2d\sum_{i=0}^{n}t_i}\, {d}t_0\dots{d}t_{n-1}{d}t_n\notag\\
&=(2d)^{n}e^{-2dt}\int_{\R_+^d}\exp\Big\{\sum_{i=0}^{n-1}t_i\xi^y_i+\Big(t-\sum_{i=0}^{n-1}t_i\Big)\xi^y_n\Big\}
\one\Big\{\sum_{i=0}^{n-1}t_i<t\Big\}\,{d}t_0\dots{d}t_{n-1}\notag\\
&\le (2d)^{n}e^{-2dt} e^{\xi^y_{i(y)}t}\prod_{i\neq i(y)}\frac{1}{\xi^y_{i(y)}-\xi^y_i},\label{sstuff}
\end{align}
where the last line follows from Lemma~\ref{uppb}. Using the definition of our function $\xi^y$ we get
\begin{align}
e^{\xi^y_{i(y)}t}\prod_{i\neq i(y)} \frac{1}{\xi^y_{i(y)}-\xi^y_i}&=e^{(\xi(y_{i(y)})+\e)t}\prod_{i\neq i(y)}\frac{1}{\e+\xi(y_{i(y)})-\xi(y_i)}
\notag\\ &\le e^{(\xi(y_{i(y)})+\e)t}\e^{-n} \prod_{(\xi(y_{i(y)})-\xi(y_i))>1} \frac{1}{\xi(y_{i(y)})-\xi(y_i)}. \label{ssstuff}
\end{align}
Next recall that 
$\rho$ is fixed, and $m_n=\lfloor n^{\rho}\rfloor$.
Let
$$G_n=\big\{ Z_{n}^{\ssup 1},\ldots, Z_{n}^{\ssup {m_n}} \big\} \subset\{z\in\Z^d \colon |z|\leq n\},$$
and call the complement $G_n^{\rm c}$ the set of sites with very low potential.  Note that there are at 
least  $|Z^{\ssup k}_n|+\lfloor \delta r_t\rfloor-m_n$ points in the path~$y$ that belong to $G_n^{\rm c}$. 
Hence there are at least 
$$ |Z^{\ssup k}_n|+\lfloor \delta r_t\rfloor-m_n$$ 
terms in the product in the left hand side of \eqref{ssstuff} that are smaller than 
$$\big(M^{(k_n)}_{n}-M^{(m_n)}_{n}\big)^{-1}$$ 
provided this is less than 1. Combining this with \eqref{foru1}, \eqref{sstuff} and \eqref{ssstuff}, 
we get
\begin{align*}
U^{\ssup {\delta,k,n}}_1(t) &\le \sum_{y\in {\mathcal P}^{\ssup {\delta,k,n}}} \e^{-n} e^{(M^{(k)}_{n}+\e-2d)t}\big(M^{(k_n)}_{n}-
M^{(m_n)}_{n}\big)^{-|Z^{\ssup k}_n|-\lfloor \delta r_t\rfloor+m_n}\\
&\le (2d)^n \e^{-n}e^{(M^{(k)}_{n}+\e-2d)t}\left(\sfrac{\rho-\s}{2}\log n \right)^
{-|Z^{\ssup k}_n|-\lfloor \delta r_t\rfloor+m_n}.
\end{align*}
Taking the $\log$ of the above and defining $C_\e:=\log(\frac{2d}\e)-\log(\sfrac{\rho-\s}2)$ we get
\begin{align*}
 \frac{1}{t}\log U^{\ssup {\delta,k,n}}_1(t)&\le  \sfrac{n}{t} \log \sfrac{2d}{\e}+M_n^{\ssup k}-2d+\e-\sfrac1t 
 \big(|Z^{\ssup k}_n|+\lfloor \delta r_t\rfloor -m_n\big)  \log \left(\sfrac{\rho-\s}{2}\log n \right)\\
 &\le M_n^{\ssup k}-\sfrac{1}{t}|Z^{\ssup k}_n| \log \log  |Z^{\ssup k}_n|-2d+\e + \sfrac{n}{t} C_{\e}
 -\delta \, \sfrac{\log \log n}{\log\log t} +o(1),
 \end{align*}
where we use that $|Z^{\ssup k}_n|+\lfloor \delta r_t\rfloor\le n$. Observing that
$\log \log n \ge(1+o(1))\log\log t$ and $\frac{n}{t} C_{\e}=o(1)$, uniformly for all $n$ in the given range, 
concludes the proof of~(b).
\bigskip

To prove part~(a) we show that regardless of the distance travelled by the path, it hits a site with very low potential in
every other step. Recall that a set $H$ of vertices of $\Z^{d}$ is $\textit{totally disconnected}$ if there is
no pair of vertices $(x,y)\in H^{2}$ such that $|x-y|=1$.

\begin{lemma} Almost surely, for sufficiently large~$n$, the set $G_{n}$ is totally disconnected.
\end{lemma}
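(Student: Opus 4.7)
The plan is to combine the sharp asymptotics for $M_n^{\ssup{m_n}}$ provided by Lemma \ref{estimate} with a Borel--Cantelli argument along dyadic scales to exclude neighbouring pairs of large potential values. As a first step, I would fix a small $\e>0$ and note that, by Lemma \ref{estimate} applied with $\beta=\rho$, almost surely for all sufficiently large $n$ every $z\in G_n$ satisfies $\xi(z)\ge M_n^{\ssup{m_n}}\ge (d-\rho-\e)\log n$. It is therefore enough to show that, almost surely, for all sufficiently large $n$ there are no two neighbouring sites $x,y\in B_n$ both with $\xi(x),\xi(y)\ge (d-\rho-\e)\log n$.

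For this I would work along the dyadic scale $n_k=2^k$ and introduce the event
$$F_k=\Big\{\exists\, x,y\in B_{n_{k+1}}\ \text{with}\ |x-y|=1\ \text{and}\ \xi(x),\xi(y)\ge (d-\rho-\e)\log n_k\Big\}.$$
Since $B_{n_{k+1}}$ contains at most $C\, n_{k+1}^d$ ordered neighbouring pairs and the potentials are i.i.d.\ with $P(\xi(0)\ge t)=e^{-t}$, a crude union bound yields
$$P(F_k)\le C\, n_{k+1}^d\, e^{-2(d-\rho-\e)\log n_k}= C'\, 2^{-k(d-2\rho-2\e)}.$$
Because $\rho<\tfrac12\le d/2$, for $\e$ small enough the exponent $d-2\rho-2\e$ is strictly positive, so $\sum_k P(F_k)<\infty$ and the Borel--Cantelli lemma yields that, almost surely, only finitely many of the events $F_k$ occur.

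To conclude, for large $n$ I would pick the unique integer $k$ with $n_k\le n<n_{k+1}$. If $G_n$ contained two neighbours $x,y$, then by the first step both potentials would exceed $(d-\rho-\e)\log n\ge (d-\rho-\e)\log n_k$, and since $x,y\in B_n\subset B_{n_{k+1}}$ the event $F_k$ would occur. As this happens only for finitely many $k$, it follows that $G_n$ is totally disconnected for all sufficiently large $n$.

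The only real subtlety is the passage to dyadic scales: a direct Borel--Cantelli argument indexed by $n\in\N$ would fail in low dimension (notably $d=1$), because $P(F_n)$ then only decays polynomially with an exponent $d-2\rho-2\e$ that need not exceed $1$. Truncating to the sparse subsequence $n_k=2^k$ costs only a bounded factor in the threshold but restores summability, and this is really the key mechanism that makes the argument work uniformly in the dimension.
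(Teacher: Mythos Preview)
Your argument is correct, but the route differs from the paper's. The paper first treats $d\ge 2$ directly: since the vector $(Z_n^{\ssup 1},\ldots,Z_n^{\ssup{m_n}})$ is uniform over ordered tuples of distinct sites in $B_n$, one has $P(|Z_n^{\ssup i}-Z_n^{\ssup j}|=1)\le 2d/l_n$, and a union bound over the $\binom{m_n}{2}$ pairs yields $P(G_n\text{ not t.d.})\le C\,n^{2\rho-d}$, which is summable as soon as $d\ge 2$; no dyadic step is needed. For $d=1$ the paper instead enlarges to $G'_n$ with $m'_n=\lfloor n^{\rho'}\rfloor$, $\rho<\rho'<\tfrac12$, applies the same uniform-pair bound along the dyadic scale $p_n$, and then proves the inclusion $G_n\subset G'_{2p_n}$ via a separate combinatorial estimate on how many of the top $m'_{2p_n}$ sites fall into $B_{p_n}$.

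Your approach trades the uniform-distribution argument for the quantitative input of Lemma~\ref{estimate}: once $\xi\ge(d-\rho-\e)\log n$ on $G_n$, independence of the potentials at neighbouring sites gives the factor $n^{-2(d-\rho-\e)}$ directly, and the dyadic thinning makes this summable uniformly in $d\ge 1$. This is more economical overall---one mechanism for all dimensions, and the somewhat delicate inclusion $G_n\subset G'_{2p_n}$ is bypassed entirely---at the modest cost of invoking Lemma~\ref{estimate}, which the paper's $d\ge 2$ argument does not need.
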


\begin{proof}
We prove the statement for $d\geq 2$ first. 
If $i$ and $j$ are distinct integers in $\{1,\ldots,m_n\}$, the random pair of points $(Z_n^{\ssup i},Z_n^{\ssup j})$ is uniformly 
distributed over all possible pairs of points in the ball of radius $n$. As no vertex has more than $2d$ neighbours, we have
$P(Z_n^{\ssup i}-Z_n^{\ssup j})\le {2d}/{l_n}$.
Summing over all possible pairs $i,j$ we get
\begin{align}
P\big(G_n \ \text{not totally disconnected} \big) &\leq \sum_{i<j}P\big(Z_n^{\ssup i}-Z_n^{\ssup j}\big)
\leq \binom{m_n}{2}\, \frac{2d}{l_n}\leq Cn^{2\rho-d}. \label{Borel}
\end{align}
for some constant $C$. Since $\rho<\frac{1}{2}$ and $d\geq 2$ we can apply the
Borel-Cantelli lemma and obtain the result.
%
We now prove the the same result when $d=1$. We introduce a new quantity 
\begin{center}
 $m'_n=\big\lfloor n^{\rho'}\big\rfloor$ with $\rho<\rho'<\frac{1}{2}$ 
\end{center}
Let $G'_{n}$ be the set of the $m'_n$ vertices in the ball of radius $n$ where the biggest values of $\xi$ are 
taken, and let $p_n$ be the biggest integer power of $2$, which is less than~$n$. 
Note that, by~\eqref{Borel}, the set $G'_{p_n}$ is totally disconnected for all sufficiently large~$n$.

We now prove that
\begin{align}
G_n\subseteq G'_{2 p_n} \quad\text{for all sufficiently large $n$.} \label{inclusion}
\end{align}
For this it suffices to show that at least $m_n$ points of $G'_{2 p_n}$ are in the ball of radius $n$. Indeed, 
if we assume this and also that $G_n\nsubseteq G'_{2 p_n}$ we can find a vertex $z_0$ satisfying, 
$|z_0|\leq n$, $z_0\in G'_{2 p_n}$ and $z_0\notin G_n$.
This implies that every $z\in G_n$ satisfies $\xi(z)>\xi(z_0)$, because $G_n$ is the set where the largest values of $\xi$ are achieved.
Then, because $z_0\in G'_{2 p_n}$, we have 
\begin{align*}
G_n\subseteq \left\{\xi(z)>\xi(z_0)\right\}\cap B_n \subseteq \left\{\xi(z)>\xi(z_0)\right\}\cap B_{2p_n}\subseteq G'_{2 p_n},
\end{align*}
which leads to a contradiction to our assumption.

In fact we will prove the slightly  stronger statement that there are at least $m_{2p_n}$ vertices of 
$G'_{2 p_n}$ in the ball of radius $p_n$, and we will now write $p$ instead of $p_n$.
We write $$G'_{2p}=\big\{a'_0,\dots,a'_{m'_{2p}-1}\big\},$$
where $a'_i$ is the vertex where $\xi(a'_{i})=M_{2p}^{(i+1)}$ and introduce
\begin{center}
$X=(X_i)_{0\leq i\leq m'_{2p}-1}$ with $X_i=\one_{\left\{|a'_i|\leq p\right\}}$  and $|X|=\sum\limits_{i=0}^{m'_{2p}-1} X_i.$
\end{center}
Observing that $m'_{2 p}=o(p)$ and that $G'_{2p}$ is uniformly distributed over all possible ordered sets and recalling that the box of 
radius~$p$ contains~$2p+1$ vertices, it is easy to see that for~$p$ big enough,
\begin{align*}
P\big(X_j=1 \, \big| \, X_i=x_i, \forall i<j \big)<\sfrac{3}{4} \mbox{ and }
P\big(X_j=0 \, \big| \, X=x_i, \forall i<j \big)<\sfrac{3}{4},
\end{align*}
for all $j\leq m'_{2p}-1$ and for all fixed $(x_0,...,x_{j-1})\in \left\{0,1\right\}^{j}$. Hence 
\begin{align*}
P\big( |X|<m_{2p} \big )&=\sum_{i=0}^{m_{2p}-1}\sum_{|x|=i}P(X=x)\\
& \le \sum_{i=0}^{m_{2p}-1} \binom{m'_{2p}}{i} \left(\frac{3}{4}\right)^{m'_{2p}}
\le m_{2p}\left(m'_{2p}\right)^{m_{2p}-1}\left(\frac{3}{4}\right)^{m'_{2p}}\\
 & = \exp\left(-m'_{2p}\log(4/3)+(m_{2p}-1)\log m'_{2p}+\log m_{2p} \right)\\
 &= e^{ -(2p)^{\rho'}(1+o(1))}\le e^{-n^{\rho'}} \quad \text{as } n\leq 2p_n.   
\end{align*}
Using the Borel-Cantelli lemma we can prove \eqref{inclusion}, which implies the statement.
\end{proof}

We define the set of paths $\mathcal P_n$ to be 
\begin{multline}
{\mathcal P}_n   = 
\Big\{  y=(y_0,\dots,y_n) \colon y_0=0,\,|y_{i-1}-y_i|=1,
\\ \{ y_0,\dots,y_n\} \cap \{Z_{n}^{\ssup 1},\ldots, Z_{n}^{\ssup{k_n-1}}\} \ne \emptyset \Big\},
\notag
\end{multline}
so that 
$$\begin{aligned}
U^{\ssup {n}}_1(t)=
\sum_{y\in {\mathcal P}_n} (2d)^{-n} {\sf E} \Big[\exp\Big\{\sum_{i=0}^{n-1}\tau_i\xi(y_{i})+
& \Big(t-\sum_{i=0}^{n-1}\tau_i\Big)\xi(y_n)\Big\} \\
& \times \one\Big\{\sum_{i=0}^{n-1}\tau_i<t,\sum_{i=0}^{n}\tau_i>t\Big\}\Big].
\end{aligned}$$
\pagebreak[3]

We can now argue similarly as for part~(b) but using this time the fact that for any path in $\mathcal P_n$ the number of step out of $G_n$ is at least $\lfloor n/2 \rfloor$.
More precisely,
\begin{align*}
U^{\ssup {n}}_1(t) &\le \sum_{y\in {\mathcal P}_n} \e^{-n} e^{(M^{(1)}_{n}+\e-2d)t}\big(M^{(k_n)}_{n}-
M^{(m_n)}_{n}\big)^{-\lfloor  n/2\rfloor},
\end{align*}
and taking the $\log$ of the above and defining $C_\e:=2\log(\frac{2d}\e)-\log(\sfrac{\rho-\s}2)$ we get
\begin{align*}
 \frac{1}{t}\log U^{\ssup{n}}_1(t)&\le  \sfrac{n}{t} \log \sfrac{2d}{\e}+M_n^{\ssup 1}-2d+\e-\sfrac1t 
 \lfloor  n/2\rfloor  \log \left(\sfrac{\rho-\s}{2}\log n \right)\\
 &= M_n^{\ssup 1}-\sfrac{n}{2t} \big(\log \log n - C_\e\big)-2d+\e +o(1),
 \end{align*}
which concludes the proof of~(a).

\subsection{Analysis of the variational problem}\label{se.vari}

We use the point process framework established in~\cite[Section 2.2]{HMS08}
adapting the approach of~\cite[Chapter 3]{Re87}. We only give an outline of
the framework and sketched proofs here, see~\cite[Section 2.2]{HMS08} for more details. 
\medskip

Observe that $\mu(dy):=e^{-y}\, dy$
is a Radon measure on~$G:=(-\infty,\infty]$. For any $z\in\Z^d$,
$x\in\R$ and $r>0$, we have
$$\begin{aligned}
r^d P\big( \xi(z)- d \log r \geq x\big) & = r^d \, e^{-d \log r - x} = e^{-x} = \mu\big( [x,\infty]\big).
\end{aligned}$$
Define, for any $q, \tau>0$ the set
$H^q_\tau:=\{ (x,y) \in \dot{\R}^d \times G \colon y \geq q|x|+\tau\},$
where $\dot{\R}^d$ is the one-point compactification of $\R^d$.
As in \cite[Lemma 4.3]{HMS08} we see that the point process
$$\zeta_r=\sum_{z\in\Z^d} \delta\big( (\sfrac{z}{r}, \xi(z)- d \log r) \big)$$
converges in law to the Poisson process $\zeta$ with intensity ${\rm Leb}_d \otimes \mu$ in the sense
that, for  any pairwise disjoint compact sets $K_1,\ldots, K_n\subset H^q_\tau$ with ${\rm Leb}_{d+1}(\partial K)=0$, 
we have that $(\zeta_r(K_1), \ldots, \zeta_r(K_n))$ converge in law to  
$$\bigotimes\limits_{i=1}^n{{\rm Poiss}\big({\rm Leb}_d \otimes \mu(K_i)\big)}.$$
We further note that for $z=t^{1+o(1)}$ we have
$$\psi_t(z):= \xi(z)- \frac{|z|}{t} \log\log |z|
= \xi(z) - \frac{|z|}{r_t}\,\big(1+o(1)\big).$$
As in \cite[Lemma 4.4]{HMS08} applied to $T_t(z,x):=(z,x-|z|)$ we infer from 
this the convergence of the point process
$$\varpi_t:=\sum_{z\in\Z^d} \delta\big( (\sfrac z{r_t},\psi_t(z)-d\log r_t) \big)$$
in law to a Poisson process~$\varpi$ with intensity
$$\big({\rm Leb}_d \otimes \mu \big)\circ T_t^{-1} = e^{-|z|-y}\, dz\, dy,$$
where now the compact sets $K_1,\ldots, K_n$ can be chosen from the set
$H_\tau:=\dot{\R}^{d+1} \setminus (\R^d \times (-\infty, \tau)).$ The form
of these and the previous domains, and in particular the use of the compactification,
ensure that we can use these convergence results to analyse the right hand side of the final
formula in Proposition~\ref{fineupper}.

\begin{lemma}\label{distance}
Let $X_t^{\ssup1}$ and $X_t^{\ssup 2}$ be the sites corresponding to
the largest and second largest value of $\psi_t(z)$, $z\in\Z^d$.
Then $\psi_t(X_t^{\ssup1}) - \psi_t(X_t^{\ssup2})$ converges in law
to a standard exponential random variable.
\end{lemma}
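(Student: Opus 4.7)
The plan is to derive the statement directly from the point-process convergence $\varpi_t \Rightarrow \varpi$ established just above the lemma. First observe that $\psi_t(X_t^{\ssup 1}) - \psi_t(X_t^{\ssup 2})$ is invariant under the vertical shift $y\mapsto y-d\log r_t$ that transforms $\psi_t(z)$ into the second coordinate of the atoms of $\varpi_t$, so it coincides with the gap between the two largest $y$-coordinates in $\varpi_t$.

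The first step is to upgrade the vague convergence $\varpi_t\Rightarrow\varpi$ on $H_\tau$ to convergence in distribution of this gap. The intensity $e^{-|z|-y}\,dz\,dy$ has finite total mass $C_d e^{-\tau}$ on $\R^d\times[\tau,\infty)$, where $C_d:=\int_{\R^d}e^{-|z|}\,dz$, so $\varpi$ has almost surely only finitely many atoms above any fixed level $\tau$ and they concentrate in a compact subset of $H_\tau$. By choosing $\tau$ sufficiently negative and a compactum $K\subset H_\tau$ sufficiently large, both $\varpi_t$ (for $t$ large) and $\varpi$ will have their top two atoms in $K$ with probability arbitrarily close to one. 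On configurations with at least two atoms in $K$ the functional ``gap between the two largest $y$-coordinates'' is continuous with respect to vague convergence, so the continuous mapping theorem transfers the convergence. This truncation argument is the main technical point.

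To identify the limiting law, I would use that the intensity $e^{-|z|-y}\,dz\,dy$ is a product measure, so projecting $\varpi$ onto the $y$-coordinate yields a Poisson point process on $\R$ with intensity $C_d e^{-y}\,dy$. Under the change of variable $U=C_d e^{-Y}$, this pushes forward to a unit-rate Poisson process on $(0,\infty)$ whose ordered arrival times $U^{\ssup 1}<U^{\ssup 2}<\cdots$ are the images of the decreasing atoms $Y^{\ssup 1}>Y^{\ssup 2}>\cdots$ of the projection. Since $U^{\ssup 1}$ and $U^{\ssup 2}-U^{\ssup 1}$ are independent standard exponentials, the ratio $U^{\ssup 1}/U^{\ssup 2}$ is uniformly distributed on $(0,1)$, whence
\begin{align*}
Y^{\ssup 1}-Y^{\ssup 2} \,=\, -\log\bigl(U^{\ssup 1}/U^{\ssup 2}\bigr)
\end{align*}
is a standard exponential variable, completing the proof.
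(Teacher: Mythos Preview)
Your proof is correct and reaches the same conclusion by a genuinely different identification of the limit law. The paper proceeds by a direct Palm-type computation: it writes
\[
P\big(\psi_t(X_t^{\ssup1})-\psi_t(X_t^{\ssup2})\ge a\big)
=\sum_y P\big(\varpi_t(\R^d\times(y,\infty))=0,\ \varpi_t(\R^d\times\{y\})=1,\ \varpi_t(\R^d\times(y-a,y))=0\big),
\]
passes to the limit using the point-process convergence, and evaluates the resulting integral $\int \exp(-2^d e^{-y+a})\,2^d e^{-y}\,dy$ by substitution to get $e^{-a}$. You instead project $\varpi$ onto the $y$-axis, push forward via $U=C_d e^{-Y}$ to a unit-rate Poisson process on $(0,\infty)$, and read off the gap law from the uniformity of $U^{\ssup1}/U^{\ssup2}$. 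Both arguments leave the convergence step at the level of a sketch; your truncation-and-continuous-mapping outline is at least as explicit as the paper's ``using careful arguments in the convergence step''. The paper's route is marginally shorter once the integral is written down, while yours is more structural and makes the exponential law appear without any integration; it also generalises immediately to the gap between the $k$th and $(k{+}1)$st order statistics.
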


\begin{proof}
Using careful arguments in the convergence step we obtain, for any $a\geq 0$,
$$\begin{aligned}
P\big( \psi_t(X_t^{\ssup1}) & - \psi_t(X_t^{\ssup2}) \geq a \big)\\
& = \sum_y P\big( \varpi_t\big(\R^d \times (y,\infty)\big) =0, \varpi_t(\R^d \times \{y\})=1, \varpi_t\big(\R^d \times (y-a,y)\big)=0 \big)\\
& \rightarrow 
\int  P\big( \varpi\big(\R^d \times (y,\infty)\big) =0\big) P\big(\varpi\big(\R^d \times (y-a,y)\big)=0 \big) e^{-y}\, dy\\
& = \int \exp(-e^{-y+a}) e^{-y}\, dy = e^{-a}.
\end{aligned}$$
\end{proof}

\begin{lemma}\label{point}
Let $X_t^{\ssup1}$ be the site corresponding to
the largest value of $\psi_t(z)$, $z\in\Z^d$.
Then $X_t^{\ssup1}/r_t$ converges in law to a random variable 
in $\R^d$ with coordinates given by independent standard exponential variables
with uniform random signs.
\end{lemma}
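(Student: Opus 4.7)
The plan is to push the point-process framework from Lemma~\ref{distance} one step further, extracting not just the top height of $\psi_t$ but also its location. Recall that $\varpi_t$ places a point at $(z/r_t,\, \psi_t(z) - d\log r_t)$ for each $z\in\Z^d$, so $X_t^{\ssup 1}/r_t$ is exactly the $\R^d$-coordinate of the $\varpi_t$-point with the largest $y$-coordinate. Since the limiting process $\varpi$ has product intensity $e^{-|z|-y}\,dz\,dy$ on $\R^d \times \R$, the natural candidate for the limit of $X_t^{\ssup 1}/r_t$ is the $\R^d$-coordinate $Z^*$ of the topmost point of $\varpi$.

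First I would establish $X_t^{\ssup 1}/r_t \Rightarrow Z^*$ by mirroring the decomposition used in Lemma~\ref{distance}. For a rectangle $A \subset \R^d$ with negligible boundary and a truncation threshold $\tau \in \R$, one writes
$$P\big(X_t^{\ssup 1}/r_t \in A,\ \psi_t(X_t^{\ssup 1}) - d\log r_t \geq \tau\big)$$
as an integral over the possible heights $y \geq \tau$ of the probability that $\varpi_t$ has a point at height $y$ lying in $A$ and no point of $\varpi_t$ strictly above that height, passes to the limit using $\varpi_t \Rightarrow \varpi$ on $H_\tau$, and then lets $\tau \to -\infty$. The uniform Gumbel tail $\exp(-2^d e^{-\tau}) \to 0$ for the event that the top height falls below $\tau$, valid for both $\varpi_t$ and $\varpi$, controls the truncation error.

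Next I would identify the law of $Z^*$ via the marking theorem. Because the intensity of $\varpi$ factorises as $e^{-|z|}\,dz \otimes e^{-y}\,dy$ and $\int_{\R^d} e^{-|z|}\,dz = 2^d$, the $y$-coordinates of $\varpi$ form a Poisson process on $\R$ with intensity $2^d e^{-y}\,dy$, each point carrying an independent $\R^d$-mark of density $2^{-d} e^{-|z|}$. Hence $Z^*$ has the product Laplace density
$$\frac{e^{-|z|}}{2^d} = \prod_{i=1}^d \tfrac{1}{2}e^{-|z_i|},$$
so its coordinates are independent and each factors as a uniform $\pm 1$ sign times a standard exponential, exactly matching the claimed law of $Y$.

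The main obstacle will be rigorously converting the vague convergence of $\varpi_t$ on $H_\tau$ into convergence of the argmax. Beyond the Gumbel control on the top height, this requires tightness of $X_t^{\ssup 1}/r_t$ in $\R^d$, which should follow from the exponential decay $e^{-|z|}$ of the limiting spatial intensity (since $\int_{|z|>R} e^{-|z|}\,dz \to 0$ as $R \to \infty$). Both points are standard once the Poisson convergence is in hand, so the whole argument should be a fairly direct variation of the computation behind Lemma~\ref{distance}.
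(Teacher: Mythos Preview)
Your proposal is correct and follows essentially the same route as the paper: both exploit the convergence $\varpi_t\Rightarrow\varpi$ to read off the location of the topmost point, arriving at the Laplace density $2^{-d}e^{-|z|}$. The only differences are cosmetic---the paper computes $P(X_t^{\ssup1}/r_t\in A)$ by a direct height-integral rather than invoking the marking theorem, and sweeps the truncation and tightness issues you flag into the phrase ``as above'' (referring back to the care taken in Lemma~\ref{distance}).
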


\begin{proof}
As above we obtain, for any $A\subset \R^d$ Borel
with ${\rm Leb}_d(\partial A)=0$,
$$\begin{aligned}
P\Big( \frac{X_t^{\ssup1}}{r_t} \in A \Big)
& = \sum_y P\big( \varpi_t\big(\R^d \times (y,\infty)\big) =0, \varpi_t\big(A \times \{y\}\big)=1\big)\\
& \rightarrow 
\int_A dz \int dy \,  e^{-|z|-y}\,  P\big( \varpi\big(\R^d \times (y,\infty)\big) =0\big)\\
& = \int_A  dz \int dy \, \exp(-e^{-y}) e^{-y-|z|} = \int_A 2^{-d}\,e^{-|z|}\, dz.
\end{aligned}$$
Observe that this implies that the limit variable has the given distribution.
\end{proof}
\pagebreak[3]

\subsection{Proof of the almost sure asymptotics}\label{se.surely}

Note that combining Lemma~\ref{lowbound} and Lemma~\ref{evlb} 
establishes the almost sure lower bound for the liminf result in Theorem~\ref{surely}.
To find a matching upper bound, recall from Lemma~\ref{upbound} that, for sufficiently large~$t$,
$$L_t \leq N_\eps(t) - 2d + \eps$$
for ${N}_\eps(t):=\overline{N}_{c(\eps)}(t)$. 
We now approximate the distribution of $N_\e (t)$. 

\begin{lemma}[Approximation for the distribution of $N_\e (t)$]\label{approx}
Let $b_t\uparrow \infty$, then 
\begin{align*}
\log\left(P\big(N_\e (t)\le b_t \big)\right)=-e^{-b_t }r_t^{d}2^{d}\left(1+o(1)\right).
\end{align*}
\end{lemma}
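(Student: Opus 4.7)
The plan is to use independence of the potentials to factor $P(N_\e(t)\le b_t)$ as a product over sites in the annulus of allowed $z$, then expand the logarithm and recognise the resulting sum as a $d$-dimensional Riemann sum approximating $\int_{\R^d} e^{-|u|}\,du = 2^d$. Write
$$\phi_t(z) := \frac{|z|}{t}\bigl(\log\log|z| + c(\e)\bigr), \qquad A_t := \bigl\{z\in\Z^d : t/(\log t)^2 \le |z| \le t\log t\bigr\}.$$
Since the $\xi(z)$ are independent standard exponential and $b_t+\phi_t(z)\ge 0$ for all $z\in A_t$ and $t$ large, we have
$$P\bigl(N_\e(t)\le b_t\bigr) = \prod_{z\in A_t}\bigl(1 - e^{-b_t - \phi_t(z)}\bigr),$$
so taking logs and using $\log(1-x)=-x+O(x^2)$ for small $x$,
$$\log P\bigl(N_\e(t)\le b_t\bigr) = -e^{-b_t}\sum_{z\in A_t} e^{-\phi_t(z)} + O\Bigl(e^{-2b_t}\sum_{z\in A_t}e^{-2\phi_t(z)}\Bigr).$$

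Next I would reduce the sum to an integral via the rescaling $u = z/r_t$, whose lattice spacing $1/r_t$ tends to zero. Using $r_t = t/\log\log t$ and $\log r_t = \log t\,(1+o(1))$, one checks that for any bounded $|u|$ one has $\phi_t(r_t u) = |u|(1+o(1))$, so the standard Riemann sum approximation yields
$$\sum_{z\in A_t} e^{-\phi_t(z)} = r_t^d \int_{\R^d} e^{-|u|}\,du\,(1+o(1)) = 2^d\, r_t^d\, (1+o(1)),$$
where the factorisation $\int_{\R^d}e^{-|u|}\,du = \prod_{i=1}^d\int_\R e^{-|u_i|}\,du_i = 2^d$ identifies the constant. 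The analogous argument gives $\sum_{z\in A_t} e^{-2\phi_t(z)} = O(r_t^d)$, whence the error term is $O(e^{-2b_t}r_t^d) = o(e^{-b_t}r_t^d)$, yielding exactly the claimed asymptotic.

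The main technical obstacle is uniformity of the approximation $\phi_t(r_t u)\sim|u|$ over the full $u$-range induced by $A_t$, namely $|u| \in [\log\log t/(\log t)^2,\,(\log t)(\log\log t)]$. I would handle this by a truncation at a slowly growing cut-off $R(t)\uparrow\infty$, for instance $R(t) = \log\log t$. In the bulk $|u|\le R(t)$ the approximation is uniform because both the additive constant $c(\e)/\log\log t$ and the relative discrepancy $(\log\log|z|-\log\log t)/\log\log t$ are $o(1)$, giving the integral contribution $2^d r_t^d(1+o(1))$. In the outer tail $|u|\ge R(t)$ a crude lower bound such as $\phi_t(r_t u)\ge |u|/2$ (valid for large $t$) controls both the sum and integral by $O(r_t^d e^{-R(t)/2}) = o(r_t^d)$. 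Finally, the inner exclusion $|z|<t/(\log t)^2$ removes an $\ell^1$-ball of Lebesgue measure $O\bigl((\log\log t/(\log t)^2)^d\bigr) = o(1)$ in $u$-space, and the outer exclusion $|z|>t\log t$ lies inside the already negligible region $|u|>(\log t)(\log\log t)$; both are absorbed into the $(1+o(1))$.
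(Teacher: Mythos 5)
Your proposal follows essentially the same route as the paper: factor the probability over independent sites, expand $\log(1-x)=-x+O(x^2)$, observe $\log\log|z|=\log\log t+o(1)$ uniformly on the annulus so that the cost term becomes $|z|/r_t\,(1+o(1))$, and pass to the integral $\int_{\R^d}e^{-|u|}\,du=2^d$ by a Riemann-sum/dominated-convergence argument. Your truncation at $R(t)$ is just a more explicit version of the paper's appeal to dominated convergence, and the rest is correct.
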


\begin{proof}
Observe that
\begin{align*}
P\left(N_\e (t)\le b_t \right)&= \prod_{t/(\log t)^{2}\le |z| \le t \log t}  \!\!\!\!\! F\left(b_t +\frac{|z|}{t}\left(\log\log |z|-C_{\e}\right)\right).
\end{align*}
The values which $|z|$ can take are such that
$\log\log |z|=\log\log t + o(1)$ 
uniformly for all $z$, and since $b_t \rightarrow\infty$, we have,
\begin{align*}
\log\big(P & \big(N_\e (t)\le b_t \big)\big)\\
&=\!\!\!\!\!\sum_{t/(\log t)^{2}\le |z| \le t \log t}\!\!\!\!\!  \log\Big(1-\exp\big(-b_t -\sfrac{|z|}{t}\big(\log\log t-C_{\e}+o(1)\big)\big)\Big)\notag\\
&=-\left(1+o(1)\right)\!\!\!\!\!\!\sum_{t/(\log t)^{2}\le |z| \le t \log t}\!\!\!\!\!e^{-b_t -\frac{|z|}{r_t}\left(1+o(1)\right)}\notag\\
&=-e^{-b_t }r_t^{d}\left(1+o(1)\right)\int_{\R^d}e^{-{|x|}(1+o(1))}\one_{\left\{\log\log t/(\log t)^{2}\le |x| \le  \log t\log\log t\right\}}\, \text{d}x
\end{align*}
To obtain our final result, we apply the dominated convergence theorem to the integral, which converges to $2^{d}$.
\end{proof}

We are now ready to prove the upper bound. We consider a sequence of times $t_n:=\exp(n^{2})$ for 
which  $N_{\e}(t_n)$ are independent random variables, in order to use Borel-Cantelli. 

\begin{lemma}[Upper bound for lower envelope of $N_\e(t_n)$]\label{Nestio} 
For any small $c>0$, almost surely there are infinitely many $n$ such that
\begin{align*}
N_{\e}(t_n)\le d\log t_n - (1+d-c) \log\log\log t_n.
\end{align*}
\end{lemma}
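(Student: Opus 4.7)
The plan is to apply the second (independence) Borel--Cantelli lemma to the events
$$A_n := \bigl\{ N_\e(t_n) \le d\log t_n - (1+d-c)\log\log\log t_n \bigr\}.$$
The choice $t_n = \exp(n^2)$ is designed so that these events are independent. Indeed, $N_\e(t)$ depends only on the potential values $\xi(z)$ with $t/(\log t)^2 \le |z| \le t\log t$, and a straightforward calculation shows that for all sufficiently large $n$ one has $t_n \log t_n < t_{n+1}/(\log t_{n+1})^2$, since $(n{+}1)^2 - n^2 = 2n+1$ grows much faster than $\log(n^2(n{+}1)^4)$. Thus the annuli defining the $N_\e(t_n)$ are eventually pairwise disjoint, which gives the required independence.

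Next I would plug $b_{t_n} := d\log t_n - (1+d-c)\log\log\log t_n$ into Lemma~\ref{approx}. The key numerical computation is
$$e^{-b_{t_n}} r_{t_n}^d \, 2^d
= t_n^{-d}(\log\log t_n)^{1+d-c} \cdot \frac{t_n^d}{(\log\log t_n)^d} \cdot 2^d
= 2^d (\log\log t_n)^{1-c},$$
so that Lemma~\ref{approx} yields
$$\log P(A_n) = -2^d (\log\log t_n)^{1-c}\bigl(1+o(1)\bigr) = -2^d (2\log n)^{1-c}\bigl(1+o(1)\bigr).$$

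Since $c>0$, the exponent $(\log n)^{1-c}$ grows strictly slower than $\log n$, so $P(A_n) \ge n^{-\eta}$ eventually for every $\eta>0$, and in particular $\sum_n P(A_n) = \infty$. Combined with independence, the second Borel--Cantelli lemma gives $P(A_n \text{ i.o.}) = 1$, which is exactly the statement of Lemma~\ref{Nestio}.

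The main point to be careful about is the independence step: one must check that the defining range of $z$ for $N_\e(t_n)$ sits strictly inside the annulus $\{t_n/(\log t_n)^2 \le |z| \le t_n \log t_n\}$ and that these annuli for consecutive $n$ do not overlap, which is where the superexponential spacing $t_n = \exp(n^2)$ is essential. The rest is a short asymptotic computation feeding Lemma~\ref{approx} and a routine application of Borel--Cantelli.
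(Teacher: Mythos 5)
Your proposal is correct and follows essentially the same route as the paper: independence of the $N_\e(t_n)$ from the disjointness of the annuli (using $t_n\log t_n < t_{n+1}/(\log t_{n+1})^2$), the computation $e^{-b_{t_n}}r_{t_n}^d 2^d = 2^d(\log\log t_n)^{1-c} = 2^d(2\log n)^{1-c}(1+o(1))$ via Lemma~\ref{approx}, and the second Borel--Cantelli lemma. The paper phrases the final step as $\log P(A_n)\ge -\log n$ eventually, which is the same divergence argument you give.
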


\begin{proof}
Note that $(N_{\e}(t_n))_{n\ge N}$ is a sequence of independent variables if $N$ is large enough. To see this
it suffices to notice that the different $(N_{\e}(t_n))_{n\ge N}$ depend on the values of the potential
on disjoints areas. Indeed 
\begin{align*}
\frac{t_{n+1}}{\left(\log t_{n+1}\right)^{2}}&=\frac{\exp\left(n^2+2n+1\right)}{(n+1)^4}> n^2\exp\left(n^2\right)=t_n \log t_n \mbox{ for all large $n$.}
\end{align*}
Now we use Lemma \ref{approx} with $b_t= d\log t - (1+d-c) \log\log\log t$ and we get,
\begin{align*}
\log\big(P\big(N_\e(t_n)\le b_{t_n}\big)\big)&=-2^{d}\left(\log\log t_n\right)^{1-c}\left(1+o(1)\right)
\ge -\log n,
\end{align*}
for all sufficiently large $n$. Hence the sum over the probabilities diverges and we obtain
our result by applying the converse of the Borel-Cantelli lemma.
\end{proof}


\subsection{Proof of the weak asymptotics}\label{se.weak}

To prove Theorem \ref{weak} we show that the upper and lower bounds we found earlier for $L_t$ 
both satisfy the required limit statement. We first state the result of~\cite[Proposition 4.12]{HMS08},
which describes the limit result for the lower bound~$\Nlt$.

\begin{lemma}[Weak asymptotics for $\Nlt$]\label{waN} As $t$ tends to infinity,
\begin{align*}
\Nlt-d\log t + d\log\log\log t \Rightarrow X, \quad\text{where}\quad P(X\le x)=\exp\big(-2^{d}e^{-x}\big).
\end{align*}
\end{lemma}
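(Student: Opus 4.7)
The plan is to read off the Gumbel limit from the Poisson convergence $\varpi_t\Rightarrow\varpi$ already assembled in Section~\ref{se.vari}, and then to identify $\Nlt$ with the related quantity $\max_{z}\psi_t(z)$ up to an $o(1)$ error, where $\psi_t(z)=\xi(z)-\frac{|z|}{t}\log\log|z|$. Arguing exactly as in the no-atom computation inside the proof of Lemma~\ref{distance},
\begin{align*}
P\bigl(\max_{z\in\Z^d}\psi_t(z)-d\log r_t\le x\bigr)&=P\bigl(\varpi_t(\R^d\times(x,\infty))=0\bigr)\\
&\longrightarrow\exp\!\Bigl(-\!\int_{\R^d\times(x,\infty)}\!\!e^{-|u|-y}\,du\,dy\Bigr)=\exp\bigl(-2^d e^{-x}\bigr),
\end{align*}
using $\int_{\R^d}e^{-|u|}\,du=2^d$ for the $\ell^1$-norm. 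Since $d\log r_t=d\log t-d\log\log\log t$, this is precisely the statement of the lemma for $\max_z\psi_t(z)$ in place of $\Nlt$, so it remains only to check that
\begin{align*}
\Nlt-\max_{z\in\Z^d}\psi_t(z)=o_{P}(1).
\end{align*}

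To establish this approximation, I would localize near-maximizers. The uniform upper bound $\xi(z)\le(d+o(1))\log|z|$ on $|z|\le t\log t$ from Lemma~\ref{m_extreme}, combined with the lower bound $\Nlt\ge d\log t-(d+1+\varepsilon)\log\log\log t$ of Lemma~\ref{evlb}, forces any near-maximizer $z^{\ast}$ of the variational problem defining $\Nlt$ to satisfy $\log|z^{\ast}|=\log t\,(1+o(1))$, $|z^{\ast}|/t=O(\log\log\log t/\log\log t)$, and $\xi(z^{\ast})\ge\tfrac{1}{2}\log|z^{\ast}|$; otherwise the candidate value would fall short of the required lower bound. On this locus $\log\xi(z^{\ast})=\log\log|z^{\ast}|+O(1)$, so
\begin{align*}
\Bigl|\bigl(\xi(z^{\ast})-\tfrac{|z^{\ast}|}{t}\log\xi(z^{\ast})\bigr)-\psi_t(z^{\ast})\Bigr|=\frac{|z^{\ast}|}{t}\cdot O(1)=o(1).
\end{align*}
The identical argument confines the maximizer of $\psi_t$ to the same locus, so the two maxima agree up to an $o(1)$ additive error, as required.

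The main obstacle is to make the localization airtight. The delicate point is to rule out that remote sites with atypically small $\xi(z)$, where $-\log\xi(z)$ can be large, enter the variational problem for $\Nlt$ with a favourable sign and spuriously drive $\Nlt$ above the Gumbel scale. I would dispose of this by a first-moment estimate in the spirit of the lower half of Lemma~\ref{m_extreme}: the expected number of $z$ with $|z|\le R$ and $\xi(z)\le\varepsilon$ is bounded by $\kappa_d R^d\varepsilon$, which becomes negligible once $\varepsilon\le R^{-d-\eta}$ for some $\eta>0$, and a Borel--Cantelli argument along dyadic scales of $R$ rules out this competing branch and closes the proof.
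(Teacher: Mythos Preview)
The paper does not actually prove this lemma; it merely cites \cite[Proposition~4.12]{HMS08}. Your proposal therefore goes beyond what the paper does, and your point-process argument for the Gumbel limit of $\max_z\psi_t(z)$ is correct and in the spirit of Section~\ref{se.vari}.

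However, your reduction from $\Nlt$ to $\max_z\psi_t(z)$ has a genuine gap. As $\Nlt$ is literally written in this paper, namely $\max_{z\in\Z^d}\{\xi(z)-\tfrac{|z|}{t}\log\xi(z)\}$, it is almost surely \emph{infinite}: for any site with $\xi(z)<1$ (which happens with probability $1-e^{-1}$, hence at a positive density of sites) the penalty becomes a bonus, and the value $\xi(z)+\tfrac{|z|}{t}|\log\xi(z)|$ tends to $+\infty$ along any sequence $|z|\to\infty$. So the competing branch you worry about in your last paragraph is not driven by \emph{atypically} small $\xi(z)$ at all---it is driven by perfectly typical values $\xi(z)\in(0,1)$ at large distance, and no first-moment or Borel--Cantelli argument on rare events can dispose of it. Your proposed bound $\kappa_d R^d\varepsilon$ with $\varepsilon\le R^{-d-\eta}$ is simply not the relevant event.

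The resolution is that the definition transcribed here is incomplete: in \cite{HMS08} the functional $\Nlt$ carries an implicit restriction (for instance to sites with $\xi(z)$ above a fixed level, or with $\log$ replaced by its positive part) that kills the small-$\xi$ branch. You should look up the precise definition there; once you have it, your localisation argument showing $\log\xi(z^\ast)=\log\log|z^\ast|+O(1)$ and $|z^\ast|/t=o(1)$ on near-maximisers goes through cleanly and the comparison with $\psi_t$ is immediate.
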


Next we check the analogous limit theorem for the upper bound~$N_\e(t)$ and thus complete 
the proof of Theorem~\ref{weak}.
  
\begin{lemma}[Weak asymptotics for $N_\e(t)$]\label{wa} As $t$ tends to infinity,
\begin{align*}
N_\e(t)-d\log t + d \log\log\log t \Rightarrow X,\quad\text{where}\quad P(X\le x)=\exp\big(-2^{d}e^{-x}\big).
\end{align*}
\end{lemma}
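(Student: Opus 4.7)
The plan is to deduce Lemma~\ref{wa} almost immediately from Lemma~\ref{approx}, by plugging in the right normalising sequence. Indeed, Lemma~\ref{approx} already gives us an explicit asymptotic formula for the cumulative distribution function of $N_\e(t)$, so all that remains is to pick the correct centring and rescaling and verify that the resulting expression produces the Gumbel law with parameter $2^d$.

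Concretely, I would set $b_t = d\log t - d\log\log\log t + x$ for arbitrary fixed $x\in\R$, and insert this into the conclusion of Lemma~\ref{approx}. Using $r_t = t/\log\log t$, a direct computation gives
$$e^{-b_t}\, r_t^d\, 2^d = t^{-d}(\log\log t)^d e^{-x} \cdot \frac{t^d}{(\log\log t)^d}\cdot 2^d = 2^d e^{-x},$$
so Lemma~\ref{approx} yields
$$\log P\bigl(N_\e(t) \le b_t\bigr) = -2^d e^{-x}\bigl(1+o(1)\bigr) \longrightarrow -2^d e^{-x} \qquad \mbox{as } t\uparrow\infty.$$
Exponentiating and rewriting the event gives
$$P\bigl(N_\e(t) - d\log t + d\log\log\log t \le x\bigr) \longrightarrow \exp\bigl(-2^d e^{-x}\bigr),$$
which is exactly the claimed convergence in distribution.

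There is no real obstacle here beyond checking the algebra: Lemma~\ref{approx} was the technical input, and the only task is to recognise that the centring $d\log t - d\log\log\log t$ is the unique one making $e^{-b_t} r_t^d$ of order one. To finish the proof of Theorem~\ref{weak}, I would then combine Lemma~\ref{waN} (weak asymptotics for the lower bound $\Nlt$) with the present Lemma~\ref{wa} (weak asymptotics for the upper bound $N_\e(t)$): since by Lemma~\ref{lowbound} and Lemma~\ref{upbound} we have $\Nlt - 2d + o(1) \le L_t \le N_\e(t) - 2d + \e + o(1)$ almost surely, and since both bounds have the same Gumbel limit (up to the constant shift $2d$ absorbed by setting $X \mapsto X-2d$ in the statement, matching the $e^{-x+2d}$ in Theorem~\ref{weak}), a sandwich argument letting $\e\downarrow 0$ yields the weak limit for $L_t$ itself.
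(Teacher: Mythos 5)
Your proposal is correct and is essentially identical to the paper's own proof: the paper likewise fixes $x\in\R$, applies Lemma~\ref{approx} with $b_t=d\log t-d\log\log\log t+x$ (which indeed tends to infinity, as the hypothesis of that lemma requires), and observes that $e^{-b_t}r_t^d 2^d=2^d e^{-x}$. The concluding sandwich argument you sketch for Theorem~\ref{weak} also matches the paper's stated strategy.
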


\begin{proof}
Fix $x\in\R$  and apply Lemma~\ref{approx} with $b_t=d\log t - d \log\log\log t +x$ to get
\begin{align*}
\log\big(P\big(N_\e(t)-d\log t + d \log\log\log t\le x \big)\big)=-e^{-x}2^{d}\,\left(1+o(1)\right),
\end{align*}
which proves our result.
\end{proof}


\subsection{Proof of the scaling limit theorem}\label{se.main}

We recall that $X_t^{\ssup k}$ $(k=1,\ 2)$ is the site at which 
$$\psi_t(z)= \xi(z)- \frac{|z|}{t} \log\log |z|$$
takes its $k^{\rm th}$ largest value. Fix $\delta>0$ and write
$$U(t)=U_1(t)+U_2(t)+U_3(t)+U_4(t)+U_5(t)+U_6(t),$$
where $U_2$ and $U_3$ were defined in Lemma~\ref{nohits}, resp.\ Lemma~\ref{jumps}, and 
\begin{align*}
U_1(t) &=\E\Big[ \exp\Big\{ \int_0^t \xi(X_s) \, ds \Big\} 
\one\big\{\sfrac{2t}{(\log t)^2} \le J_t\le t a_t,
\max_{0\le s\le t}\xi(X_s) > M_{J_t}^{\ssup{k_{J_t}}}\big\}\\
&\phantom{ = \E\Big[ \exp\Big\{ \int_0^t \xi(X_s) \, ds \Big\} }\hspace{2mm} 
\one \big\{ X^{\ssup 1}_t \in \{X_s \colon 0\leq s \leq t\}, |X_t-X^{\ssup 1}_t| \leq \delta r_t\big\}\Big],\\
U_4(t) &= \E\Big[ \exp\Big\{ \int_0^t \xi(X_s) \, ds \Big\} 
\one\big\{ t a_t < J_t \le t \log t,  \max_{0\le s\le t}\xi(X_s) > M_{J_t}^{\ssup{k_{J_t}}}\big\}\Big],\\
U_5(t) &= \E\Big[ \exp\Big\{ \int_0^t \xi(X_s) \, ds \Big\} 
\one\big\{\sfrac{2t}{(\log t)^2} \le J_t\le t a_t,
\max_{0\le s\le t}\xi(X_s) > M_{J_t}^{\ssup{k_{J_t}}}\big\}\\
&\phantom{ = \E\Big[ \exp\Big\{ \int_0^t \xi(X_s) \, ds \Big\} }\hspace{2mm} 
\one\big\{ X^{\ssup 1}_t \not\in \{X_s \colon 0\leq s \leq t\} \big\} \Big],\\
U_6(t) &= \E\Big[ \exp\Big\{ \int_0^t \xi(X_s) \, ds \Big\} 
\one\big\{\sfrac{2t}{(\log t)^2} \le J_t\le t a_t,
\max_{0\le s\le t}\xi(X_s) > M_{J_t}^{\ssup{k_{J_t}}}\big\}\\
&\phantom{ = \E\Big[ \exp\Big\{ \int_0^t \xi(X_s) \, ds \Big\} }\hspace{2mm} 
\one\big\{ X^{\ssup 1}_t \in \{X_s \colon 0\leq s \leq t\}, |X_t-X^{\ssup 1}_t| > \delta r_t\big\}\Big].
\end{align*}
Observe that our result follows if the contributions of $U_i(t)$ for $i=2, \ldots, 6$ to the total mass are negligible,
as $U_1(t)$ only contributes to the mass distributed on points close to $X_t^{\ssup 1}$ on the $r_t$ scale. 

\begin{lemma}\label{4and5}
Suppose $a_t \downarrow 0$ and $a_t \log\log t \to \infty$. Then we
have, in probability,
$$\lim_{t\uparrow \infty} \frac{U_4(t)}{U(t)} =
\lim_{t\uparrow \infty} \frac{U_5(t)}{U(t)} = 
\lim_{t\uparrow \infty} \frac{U_6(t)}{U(t)} = 0.$$
\end{lemma}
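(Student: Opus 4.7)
The plan is to control each of $U_4/U$, $U_5/U$, and $U_6/U$ by comparing the relevant upper bound from Proposition~\ref{fineupper} with the lower bound $L_t \ge \underline{N}(t) - 2d + o(1)$ coming from Lemma~\ref{lowbound} and $\underline{N}(t) \ge d\log t -(d+1+\varepsilon)\log\log\log t$ from Lemma~\ref{evlb}. In each case the goal is to establish, on an event of probability tending to one, a bound of the form $U_i(t) \le e^{-\omega(t)} U(t)$ with $\omega(t)\to\infty$.

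For $U_6$, decompose according to the smallest $\xi$-rank $j^*$ (in $B_{J_t}$) that the walk visits. When $j^*$ equals the $\xi$-rank $k^*$ of $X_t^{\ssup 1}$ in $B_{J_t}$, apply Proposition~\ref{fineupper}(b) with $k = j^*$ and the prescribed $\delta>0$; this gives
$$\tfrac{1}{t}\log U_1^{\ssup{\delta,j^*,n}}(t) \le \psi_t(X_t^{\ssup 1}) - 2d - \delta + o(1) \le L_t - \tfrac{\delta}{2},$$
the last inequality holding on the high-probability event on which the bound of Proposition~\ref{fineupper}(a) for $k=1$, $\delta=0$ is nearly attained. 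When $j^* \ne k^*$, the walk visits a site $Z^{\ssup{j^*}}_n \ne X_t^{\ssup 1}$, so Proposition~\ref{fineupper}(b) with $\delta=0$ gives a bound of $\psi_t(Z^{\ssup{j^*}}_n) - 2d + o(1) \le \psi_t(X_t^{\ssup 2}) - 2d + o(1)$, separated from $L_t$ by the gap of Lemma~\ref{distance}. Summing the (polynomially many) decomposition terms in $(k,n)$ does not affect the exponential rate.

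The treatment of $U_5$ is essentially the same: the indicator $\{X_t^{\ssup 1}\notin\{X_s\}\}$ guarantees that $Z^{\ssup{j^*}}_n \ne X_t^{\ssup 1}$ for every choice of $j^*$, so Proposition~\ref{fineupper}(b) with $\delta = 0$ applied rank-by-rank yields the bound $\psi_t(X_t^{\ssup 2}) - 2d + o(1)$. By Lemma~\ref{distance}, $\psi_t(X_t^{\ssup 1}) - \psi_t(X_t^{\ssup 2})$ converges in law to a standard exponential, hence exceeds any fixed $c>0$ with probability converging to $e^{-c}$; picking $c$ small makes the event of probability as close to $1$ as desired, and on that event $U_5/U \le e^{-ct}$.

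The main difficulty is $U_4$. For $n\in(ta_t, t\log t]$ Proposition~\ref{fineupper}(a) furnishes
$$\tfrac{1}{t}\log U_1^{\ssup n}(t) \le M_n^{\ssup 1} - \tfrac{n}{2t}(\log\log n - C_\varepsilon) + \varepsilon - 2d + o(1).$$
The naive a.s.\ upper bound $M_n^{\ssup 1}\le d\log n + \log\log n + (\log\log n)^\delta$ from Lemma~\ref{m_extreme} is too crude; instead I would use the sharper in-probability control $M_n^{\ssup 1} = d\log n + O_P(1)$ furnished by the point-process framework of Section~\ref{se.vari}. Partitioning $[ta_t, t\log t]$ into the $O(\log\log t)$ dyadic pieces $I_k = [2^k ta_t, 2^{k+1}ta_t]$, on each $I_k$ the monotonicity of $M_n^{\ssup 1}$ together with Gumbel-type tail estimates yields $\max_{n\in I_k} M_n^{\ssup 1} \le d\log(2^{k+1}ta_t) + O(\log\log\log t)$ uniformly in $k$, with probability tending to one. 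Combining with the cost term at the left endpoint $\frac{a_t}{2}\log\log t$, the assumption $a_t\log\log t\to\infty$ ensures that this cost eventually dominates both the fluctuation $O(\log\log\log t)$ and the slack $d\log\log\log t$ appearing in the lower bound for $\underline{N}(t)$, so $\frac{1}{t}\log(U_4/U) \to -\infty$ in probability. The delicate bookkeeping to make the dyadic argument uniform is the principal obstacle.
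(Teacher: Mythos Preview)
Your handling of $U_5$ and $U_6$ matches the paper's strategy: decompose by the smallest visited $\xi$-rank $j^*$ in $B_{J_t}$ and apply Proposition~\ref{fineupper}(b), then use the gap from Lemma~\ref{distance}. For $U_6$ your two-case split ($j^*=k^*$ versus $j^*<k^*$, where $k^*$ is the $\xi$-rank of $X_t^{\ssup 1}$ in $B_n$) is in fact more careful than the paper's one-line ``sum $U_1^{\ssup{\delta,k,n}}$ over all $k$'': when $j^*<k^*$ the endpoint condition $|X_t-Z_n^{\ssup{j^*}}|\ge\delta r_t$ need not hold, so one really must fall back to the $\delta=0$ bound and invoke Lemma~\ref{distance}, exactly as you do.

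For $U_4$ there is a genuine gap. The claim that ``$a_t\log\log t\to\infty$ ensures that this cost eventually dominates the fluctuation $O(\log\log\log t)$'' is false: the hypothesis permits e.g.\ $a_t=(\log\log\log t)^{1/2}/\log\log t$, for which $a_t\log\log t=(\log\log\log t)^{1/2}=o(\log\log\log t)$. A dyadic union bound over $O(\log\log t)$ scales unavoidably costs a $\log\log\log t$ in the Gumbel fluctuation, and that cannot be absorbed for all admissible $a_t$. (For the application to Theorem~\ref{main} one could simply pick $a_t$ decaying more slowly, but the lemma as stated needs more.) The cure is to avoid the union bound: since $n\mapsto\frac{n}{2t}\log\log n$ is increasing,
\[
\sup_{n\ge ta_t}\Big\{M_n^{\ssup 1}-\tfrac{n}{2t}\log\log n\Big\}
=\sup_{z}\Big\{\xi(z)-\tfrac{\max(|z|,ta_t)}{2t}\log\log\max(|z|,ta_t)\Big\}.
\]
For $|z|\le ta_t$ this equals $M_{ta_t}-\frac{a_t}{2}\log\log t\,(1+o(1))$; a \emph{single} Gumbel estimate $M_{ta_t}=d\log(ta_t)+O_P(1)=d\log r_t+d\log(a_t\log\log t)+O_P(1)$ then gives $d\log r_t$ minus a quantity tending to $+\infty$. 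For $|z|>ta_t$ a first-moment computation (or the point-process convergence of $\zeta_{r_t}$ from Section~\ref{se.vari}) shows $\sup_{|z|>ta_t}\{\xi(z)-\frac{|z|}{2t}\log\log|z|\}-d\log r_t\to-\infty$ in probability, since the region $|z|/r_t>a_t\log\log t$ escapes to infinity in the limiting Poisson picture. Comparing both pieces with $\underline N(t)=d\log r_t+O_P(1)$ finishes; this is presumably what the paper's terse appeal to Lemmas~\ref{point} and~\ref{distance} intends.
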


\begin{proof}
For the first statement we use Proposition~\ref{fineupper}\,(a) to see that
$$\frac1t\, \log U_4(t) \le
\sup_{n \ge ta_t} \Big\{M_n^{\ssup 1} - \frac{n}{2t}\,\big( \log\log n - C_\eps\big)\Big\} + \eps - 2d+o(1).$$
By Lemmas~\ref{point} and~\ref{distance} the limit of the right hand side is strictly smaller
than the growth rate of $U(t)$, proving that the first limit in the statement equals zero.
%

Using Proposition~\ref{fineupper}\,(b) with $\delta=0$ and summing over all 
$1\le k \le t^\sigma$ with  $X_t^{\ssup 1} \not= Z_n^{\ssup k},$
and over all $n$ with $2t/(\log t)^2\le n\le t a_t$ we get
$$\frac 1t \log  U_5(t) \leq \max_{z \setminus\{X_t^{(1)}\}} \psi_t(z)  -2d +o(1) = \psi_t(X_t^{\ssup2})-2d +o(1) 
\quad \mbox{in probability. }$$
By Lemma~\ref{distance} we find $\epsilon>0$ such that, with a probability arbitrarily
close to one 
$$\frac 1t \log  U_5(t) \leq \psi_t(X_t^{\ssup1})-2d - \epsilon +o(1),$$
and a comparison with the lower bound $\Nlt$ for the growth rate of $U(t)$ proves the second result.
\pagebreak[3]
 
For the third statement we use  Proposition~\ref{fineupper}\,(b) with the choice of
$\delta>0$ from the statement. Summing over all $1\le k \le t^\sigma$ and $n$ with
$2t/(\log t)^2\le n\le t a_t$ we get, as above,
$$\frac 1t \log  U_6(t) \leq \psi_t(X_t^{\ssup1}) -2d - \delta+o(1).$$ 
We can now argue as before that this rate is strictly smaller
than the lower bound $\Nlt$ for~$U(t)$, proving the final statement.
\end{proof}
\smallskip

We can now complete the proof of Theorem~\ref{main}. By definition we have
\begin{align*}
1 & \geq \liminf_{t\uparrow\infty} \nu_t\big\{z\in \Z^d \ \big| \ |z-X^{\ssup1}_t| \leq \delta r_t\big\}
 \geq \liminf_{t\uparrow\infty} \frac{U_1(t)}{U(t)}=
 1- \limsup_{t\uparrow\infty} \sum_{j=2}^6 \frac{U_j(t)}{U(t)}.
\end{align*}
Combining Lemmas~\ref{nohits},~\ref{jumps} and~\ref{4and5} we see that the limsup is zero, so that
we get
$$\lim_{t\uparrow\infty}\nu_t\big\{z\in \Z^d \ \big| \ |z-X^{\ssup1}_t| \leq \delta r_t\big\}=1
\quad \mbox{ in probability.}\\[-3mm]$$
Combining this with the convergence of $X_t^{\ssup 1}/r_t$ given in
Lemma~\ref{point} and recalling that $\delta>0$ was arbitrary concludes the proof.

\section{Concluding remarks}

It would be interesting  to study scaling limit theorems for potentials with lighter tails 
and thus  shed further light on the number of relevant islands in these cases. 
\smallskip

The techniques of the present paper appear suitable to treat cases where the relevant islands are single sites, 
which is the case for potentials heavier than the double-exponential distributions. For the double-exponential distribution
itself and lighter tails, arguments related to classical order statistics of i.i.d.\ random variables need
to be replaced by eigenvalue order statistics for the random Schr\"odinger operator $\Delta + \xi$ 
on $\ell^2(\Z^d)$, making the problem much more complex. Work in an advanced state of progress by Biskup 
and K\"onig~\cite{BK10} deals with the double-exponential case and strongly hints at localization in a single
island of finite size in this and other cases of unbounded potentials. 
\smallskip

For \emph{bounded} potentials the question of the number of relevant islands and the formulation of a scaling limit 
theorem at present seems wide open  and constitutes an attractive research project. Sznitman in~\cite{Sz95} discusses
an `elliptic version' of the Anderson problem, describing Brownian paths in a Poissonian potential conditioned 
to reach a remote location. Sznitman's technique of enlargement of obstacles, described in~\cite{Sz98}, offers a 
possible approach to the scaling limit theorem, leading in~\cite{Sz97} to a study of fluctuations of the
principal eigenvalues of the operator $\Delta + \xi$ and moreover an analysis of variational problems
somewhat similar to those that we expect to arise in the proof of a scaling limit theorem. 
\smallskip

In the light of our result and this discussion it would be of particular interest to know whether there at
all exist potentials which lead to more than one relevant island, and  if so, to 
find the nature and location of the transition between phases of one and several islands.
\medskip

{\bf Acknowledgements:} Special thanks are due to the organizers of the 
\emph{Workshop on Random Media}, in celebration of J\"urgen G\"artner's 60th birthday, which provided
an ideal forum for discussing the problems raised in this paper. The first author acknowledges the support of ERC grant PTRELSS. The second author is grateful for 
the support of EPSRC through an Advanced Research Fellowship.

\vspace{0.8cm}

\end{document}